\theoremstyle{plain}
\newtheorem{theorem}{Theorem}
\newtheorem{lemma}[theorem]{Lemma}
\newtheorem{corollary}[theorem]{Corollary}
\theoremstyle{definition}
\newtheorem{definition}[theorem]{Definition}
\newtheorem{problem}[theorem]{Problem}
\newtheorem{remark}[theorem]{Remark}
\newtheorem{example}[theorem]{Example}
\numberwithin{equation}{section}
\newcommand{\heading}[1]{\bigskip\noindent\textsc{\underline{#1}.}}
\newcommand{\C}{\mathbb{C}}
\newcommand{\F}{\mathcal{F}}
\newcommand{\R}{\mathbb{R}}
\newcommand{\E}{\mathbf{E}}
\newcommand{\M}{\mathrm{M}}
\newcommand{\N}{\mathbb{N}}
\newcommand{\NN}{\mathfrak{N}}
\newcommand{\OO}{\mathrm{O}}
\renewcommand{\P}{\mathbf{P}}
\newcommand{\V}{\mathcal{V}}
\newcommand{\W}{\mathcal{W}}
\newcommand{\h}{\mathrm{H}}
\newcommand{\LL}{\vec{\lambda}}
\def\mx{\langle x,x^*\rangle}
\newcommand{\TT}{\mathrm{T}}
\newcommand{\X}{\vec{X}}
\newcommand{\tr}{\operatorname{tr}}
\newcommand{\diag}{\operatorname{diag}}
\newcommand{\ran}{\operatorname{ran}}
\renewcommand{\vec}[1]{{\bm{#1}}}
\newcommand{\norm}[1]{\| #1 \|}
\newcommand{\cnorm}[1]{|\!|\!| #1 |\!|\!|}
\begin{document}

\title[Norms on Complex Matrices Induced by Random Vectors]{Norms on Complex Matrices Induced by Random Vectors }
\author[\'A. Ch\'avez]{\'Angel Ch\'avez}
\address{Department of Mathematics and Statistics, Pomona College, 610 N. College Ave., Claremont, CA 91711} 
	\email{angel.chavez@pomona.edu}

\author[S.R.~Garcia]{Stephan Ramon Garcia}
\email{stephan.garcia@pomona.edu}
\urladdr{\url{http://pages.pomona.edu/~sg064747}}

\author[J.~Hurley]{Jackson Hurley}
\email{jacksonwhurley@gmail.com}

\thanks{SRG partially supported by NSF grant DMS-2054002.}

\begin{abstract}
We introduce a family of norms on the $n \times n$ complex matrices. These norms arise from a probabilistic framework, and their construction and validation involve probability theory, partition combinatorics, and trace polynomials in noncommuting variables. As a consequence, we obtain a generalization of Hunter's positivity theorem for the complete homogeneous symmetric polynomials.
\end{abstract}

\keywords{norm, symmetric polynomial, partition, trace, positivity, convexity, expectation, complexification, trace polynomial, probability distribution}
\subjclass[2000]{47A30, 15A60, 16R30}
\maketitle

\section{Introduction}
This paper introduces norms on the space $\M_n$ of $n\times n$ complex matrices that are induced by random vectors in $\R^n$. Specifically, we construct a family of norms for each random vector $\X$ whose entries are identically distributed (iid) random variables with sufficiently many moments. Initially, these norms are defined on complex Hermitian matrices as symmetric functions of their (necessarily real) eigenvalues.  This contrasts with Schatten and Ky-Fan norms, which are defined in terms of singular values.  To be more specific, our norms do not arise from the machinery of symmetric gauge functions \cite[Sect.~7.4.7]{HJ}.

The random vector norms we construct are actually generalized versions of the complete homogeneous symmetric (CHS) polynomial norms introduced in \cite{Aguilar}. 

\subsection{Preliminaries}
Our main result (Theorem \ref{Theorem:Main} on p.~\pageref{Theorem:Main}) connects a wide range of topics,
such as cumulants, Bell polynomials, partitions, and Schur convexity,
which may not all be familiar to a given reader.  Consequently, we briefly cover the preliminary concepts and notation necessary to state our main results.

\heading{Numbers and Matrices}
In what follows, $\N = \{1,2,\ldots\}$; the symbols $\R$ and $\C$ denote the real and complex number systems, respectively.
Let $\M_n$ denote the set of $n \times n$ complex matrices and
$\h_n \subset \M_n$ the subset of $n\times n$ Hermitian complex matrices.
We reserve the letter $A$ for Hermitian matrices (so $A=A^*$) and $Z$ for arbitrary square complex matrices.
The eigenvalues of each $A\in \h_n$ are real and denoted
$\lambda_1(A)\geq \lambda_2(A)\geq \cdots \geq \lambda_n(A)$. 
We may write $\lambda_1, \lambda_2, \ldots, \lambda_n$ if $A$ is understood and we denote by
$\LL=(\lambda_1, \lambda_2, \ldots, \lambda_n)$ the vector of  eigenvalues of $A$. 

\heading{Probability theory}
A \emph{probability space} is a measure space $(\Omega, \F, \P)$, in which 
$\F$ is a $\sigma$-algebra on $\Omega$, $\P$ is nonnegative, and $\P(\Omega)=1$.
A \emph{random variable} is a measurable function $X: \Omega\to \R$. 
We assume that $\Omega \subseteq \R$ and $X$ is nondegenerate, that is, nonconstant. 
The \emph{expectation} of $X$ is 
$\E  [X]=\int_{\Omega} X \,d\P$, often written as $\E X$.
For $p\geq 1$, let $L^p(\Omega, \F, \P)$ denote the vector space of random variables
such that $\norm{X}_{L^p}=(\E |X|^p)^{1/p} < \infty$.
The pushforward measure $X_*\P$ of $X$
is the \emph{probability distribution} of $X$.  
The \emph{cumulative distribution} of $X$ is $F_X(x)=\P(X\leq x)$, which is the pushforward measure of $(-\infty, x]$. 
If $X_{*}\P$ is absolutely continuous with respect to Lebesgue measure $m$, then
the Radon--Nikodym derivative
$f_X= dX_* P/dm$ is the \emph{probability density function} (PDF) of $X$.  
See \cite[Ch.~1]{Billingsley} for background.

\heading{Random vectors}
A \emph{random vector} is a tuple $\X=(X_1, X_2, \ldots, X_n)$, in which 
$X_1, X_2, \ldots, X_n$ are real-valued random variables on a common probability space $(\Omega,\F,\P)$;
we assume $\Omega \subseteq \R$.
A random vector $\X$ is \emph{positive definite} if its \emph{second-moment matrix} 
$\Sigma(\X) = [\E X_iX_j ]_{i,j=1}^n$ exists and is positive definite. This occurs if the $X_i$ are identically
distributed and belong to $L^2(\Omega,\F,\P)$; see Lemma \ref{Lemma:PosDef}.

\heading{Moments}
For $k \in \N$, the $k$th \emph{moment} of $X$ is $\mu_k = \E[X^k]$, if it exists.
If $X$ has PDF $f_X$, then $\mu_k = \int_{-\infty}^{\infty} x^k f_X(x)\,dm(x)$.
The \emph{mean} of $X$ is $\mu_1$ and 
the \emph{variance} of $X$ is $\mu_2 - \mu_1^2$; Jensen's inequality ensures that 
the variance is positive since $X$ is nondegenerate.
The \emph{moment generating function} (if it exists) of $X$ is
\begin{equation}\label{eq:MGF}
M(t)=\E [e^{tX}]=\sum_{k=0}^{\infty} \E [X^k] \frac{t^k}{k!} = \sum_{k=0}^{\infty} \mu_k\frac{t^k}{k!}.
\end{equation}
If $X_1, X_2, \ldots, X_n$ are independent, then 
$\E [X_1^{i_1} X_2^{i_2}\cdots X_n^{i_n}]=\prod_{k=1}^n\E  [X_k^{i_k}]$
for all $i_1, i_2, \ldots, i_n \in \N$ whenever both sides exist.

\heading{Cumulants}
If $X$ admits a moment generating function $M(t)$, 
then the $r$th \emph{cumulant} $\kappa_r$ of $X$ is defined by the \emph{cumulant generating function} 
\begin{equation}\label{eq:CumulantGen}
K(t)=\log M(t)=\sum_{r=1}^{\infty} \kappa_r \frac{t^r}{r!}.
\end{equation} 
The first two cumulants are $\kappa_1 = \mu_1$ and $\kappa_2 = \mu_2 - \mu_1^2$.
If $X$ does not admit a moment generating function but $X\in L^d(\Omega, \mathcal{F}, \P)$ for some  
$d\in \N$, we can define $\kappa_1, \kappa_2, \ldots, \kappa_d$ by the recursion
$\mu_r=\sum_{\ell=0}^{r-1}{r-1\choose \ell} \mu_{\ell}\kappa_{r-\ell}$
for $1 \leq r \leq d$; see \cite[Sec.~9]{Billingsley}.

\heading{Power-series coefficients}
The coefficient $c_k$ of $t^k$ in $f(t) = \sum_{r=0}^{\infty} c_r t^r$ is denoted $[t^k]f(t)$,
as is standard in combinatorics and the study of generating functions.

\heading{Complete Bell polynomials}
The \emph{complete Bell polynomials of degree $\ell$} \cite[Sec.~II]{Bell} are the polynomials $B_{\ell}(x_1, x_2, \ldots, x_{\ell})$ defined by 
\begin{equation}\label{eq:ExpoBell}
\sum_{\ell=0}^{\infty} B_{\ell}(x_1, x_2, \ldots, x_{\ell}) \frac{t^{\ell}}{\ell !}=\exp\bigg( \sum_{j=1}^{\infty} x_j \frac{t^j}{j!}\bigg).
\end{equation} 
The first several even-degree complete Bell polynomials are 
\begin{align}
B_0 &= 1,  \nonumber\\
B_2(x_1,x_2)&=x_1^2+x_2, \quad \text{and} \nonumber\\
B_4(x_1, x_2, x_3, x_4)&=x_1^4+6x_1^2x_2+4x_1x_3+3x_2^2+x_4. \label{eq:Bell4}
\end{align}

\heading{Symmetric and positive functions} 
A function is \emph{symmetric} if it is invariant under all permutations of its arguments. 
A continuous real-valued function on $\M_n$ or $\h_n$ is \emph{positive definite} if it is everywhere 
positive, except perhaps at $0$.

\heading{Partitions} 
A \emph{partition} of $d\in \N$ is a tuple $\vec{\pi}=(\pi_1, \pi_2, \ldots, \pi_r) \in \N^r$ such that
$\pi_1 \geq \pi_2 \geq \cdots \geq \pi_r$ and
$\pi_1+ \pi_2 + \cdots + \pi_r = d$ \cite[Sec.~1.7]{StanleyBook1}.  We denote this 
$\vec{\pi} \vdash d$ and write $| \vec{\pi}| = r$ for the number of parts in the partition.
Define
\begin{equation}\label{eq:DefY}
\kappa_{\vec{\pi}} = \kappa_{\pi_1} \kappa_{\pi_2} \cdots \kappa_{\pi_{\ell}}
\quad \text{and} \quad
y_{\vec{\pi}}=\prod_{i\geq 1}(i!)^{m_i}m_i!,
\end{equation} 
in which $m_i=m_i(\vec{\pi})$ is the multiplicity of $i$ in $\vec{\pi}$.
For example, 
$\vec{\pi} = (4,4,2,1,1,1)$ yields $\kappa_{\vec{\pi}} = \kappa_4^2 \kappa_2 \kappa_1^3$ and
$y_{\vec{\pi}}= (1!^3 3!) (2!^1 1!) (4!^2 2!) = 13{,}824$.
Note that $y_{\vec{\pi}}$ is not the quantity $z_{\vec{\pi}} = \prod_{i \geq 1} i^{m_i} m_i!$ 
from symmetric function theory \cite[Prop.~7.7.6]{StanleyBook2}.

\heading{Power sums} 
For $\vec{\pi} \vdash d$, let
$p_{\vec{\pi}}(x_1, x_2, \ldots, x_n)=p_{\pi_1}p_{\pi_1}\cdots p_{\pi_r}$,
in which 
\begin{equation*}
p_k(x_1,x_2, \ldots, x_n)=x_1^k+x_2^k+\cdots +x_n^k
\end{equation*}
is a \emph{power-sum symmetric polynomial};
we simply write $p_k$ if the variables are clear from context.
If $A \in \h_n$ has eigenvalues $\vec{\lambda} = (\lambda_1,\lambda_2,\ldots,\lambda_n)$, then we write
\begin{equation}\label{eq:pTrace}
p_{\vec{\pi}}(\vec{\lambda}) =p_{\pi_1}(\vec{\lambda})p_{\pi_2}(\vec{\lambda})\cdots p_{\pi_r}(\vec{\lambda})
=(\tr A^{\pi_1})(\tr A^{\pi_2})\cdots (\tr A^{\pi_{r}}).
\end{equation} 

\heading{Complete homogeneous symmetric polynomials} 
The \emph{complete homogeneous symmetric polynomial} (CHS) of degree $d$ 
in $x_1, x_2, \ldots x_n$ is
\begin{equation}\label{eq:CHS}
h_d(x_1,x_2,\ldots,x_n) = \sum_{1 \leq i_1 \leq \cdots \leq i_{d} \leq n} x_{i_1} x_{i_2}\cdots x_{i_d},
\end{equation}
the sum of all monomials of degree $d$ in $x_1,x_2,\ldots,x_n$;
 \cite[Sec.~7.5]{StanleyBook2}.   For example,\label{p:CHS}
\begin{align*}
h_0(x_1,x_2) &=1, \\
h_2(x_1,x_2)&= x_1^2+x_1 x_2+x_2^2, \quad\text{and}\\
h_4(x_1,x_2)&= x_1^4  + x_1^3 x_2 + x_1^2 x_2^2 + x_1 x_2^3 + x_2^4.
\end{align*}
Hunter proved that the even-degree 
complete homogeneous symmetric (CHS) polynomials are positive definite \cite{Hunter}.
This has been rediscovered many times
\cite[Thm.~1]{Aguilar},
\cite[Lem.~3.1]{Barvinok},
\cite{Baston},
\cite[Thm.~2]{BGON},
\cite[Cor.~17]{GOOY},
\cite[Thm.~2.3]{Roventa},
and \cite[Thm.~1]{Tao}.

\heading{Schur convexity}
Let $\widetilde{\vec{x}}=(\widetilde{x}_1, \widetilde{x}_2, \ldots, \widetilde{x}_n)$ be the 
nondecreasing rearrangement of $\vec{x}=(x_1,x_2, \ldots, x_n) \in \R^n$. 
Then $\vec{y}$  \emph{majorizes} $\vec{x}$, denoted $\vec{x}\prec \vec{y}$, if 
\begin{equation*}
\sum_{i=1}^n \widetilde{x}_i = \sum_{i=1}^n \widetilde{y}_i
\quad \text{and} \quad
\sum_{i=1}^k \widetilde{x}_i \leq \sum_{i=1}^k \widetilde{y}_i
\quad \text{for $1 \leq k \leq n$}.
\end{equation*}
A function $f:\R^n\to \R$ is \emph{Schur convex} if  $f(\vec{x})\leq f(\vec{y})$ whenever $\vec{x}\prec \vec{y}$. 
A symmetric function $f$ is Schur convex if and only if 
\begin{equation*}
(x_i-x_j)\Big( \frac{\partial}{\partial x_i}-\frac{\partial}{\partial x_j} \Big)f(x_1, x_2, \ldots, x_n)\geq 0
\quad\text{for all $1\leq i<j\leq n$},
\end{equation*}
with equality if and only if $x_i=x_j$ \cite[p.~259]{Roberts}.

\subsection{Statement of Main Results}
With the preliminary concepts and notation covered, 
we are now in a position to state our main theorem.
In what follows, $\Gamma$ is the gamma function and $\langle \cdot, \cdot\rangle$ is the Euclidean inner product on $\R^n$.

\begin{theorem}\label{Theorem:Main}
Let $d\geq 2$ and $\X=(X_1, X_2, \ldots, X_n)$, in which
$X_1, X_2, \ldots, X_n \in L^d(\Omega,\F,\P)$ are nondegenerate independent and identically distributed random variables.
\begin{enumerate}[leftmargin=*]
\item $\cnorm{A}_{\X,d}= \bigg(\dfrac{  \E |\langle \X, \LL\rangle|^d}{\Gamma(d+1)} \bigg)^{1/d}$ is a norm on $\h_n$.

\item If the $X_i$ admit a moment generating function $M(t)$ and $d \geq 2$ is even, then
\begin{equation}\label{eq:MainMGF}
\cnorm{A}_{\X,d}^d = [t^d] M_{\Lambda}(t)
\quad \text{for all $A \in \h_n$},
\end{equation}
in which $M_{\Lambda}(t) = \prod_{i=1}^n M(\lambda_i t)$ is the moment generating
function for the random variable 
$\Lambda =\langle \X, \LL(A) \rangle=\lambda_1X_1+\lambda_2X_2+\cdots +\lambda_n X_n$.
In particular, $\cnorm{A}_{\X,d}$ is a positive definite, homogeneous,
symmetric polynomial in the eigenvalues of $A$.

\item If the first $d$ moments of $X_i$ exist, then
\begin{align}
\cnorm{A}_{\X,d}^d
&= \frac{1}{d!} B_{d}(\kappa_1\tr A, \kappa_2\tr A^2, \ldots, \kappa_d\tr A^d) \label{eq:MainBell} \\
&= \sum_{\vec{\pi}\vdash d}\frac{\kappa_{\vec{\pi}}p_{\vec{\pi}} (\vec{\lambda})}{y_{\vec{\pi}}}
\quad \text{for $A \in \h_n$}, \label{eq:RealPermForm}
\end{align}
in which $B_d$ is given by \eqref{eq:ExpoBell},
and in which $\kappa_{\vec{\pi}}$ and $y_{\vec{\pi}}$ are defined in \eqref{eq:DefY},
$p_{\vec{\pi}} (\vec{\lambda})$ is defined in \eqref{eq:pTrace},
and the second sum runs over all partitions $\vec{\pi}$ of $d$. 

\item The function $\vec{\lambda}(A) \mapsto \cnorm{A}_{\X,d}$ is Schur convex.

\item Let $\vec{\pi}=(\pi_1, \pi_2, \ldots,\pi_r)$ be a partition of $d$. 
Define $\TT_{\vec{\vec{\pi}}} : \M_{n}\to \R$
by setting $\TT_{\vec{\pi}}(Z)$ to be $1/{d\choose d/2}$ times the sum over the $\binom{d}{d/2}$ 
possible locations to place $d/2$ adjoints ${}^*$ among the $d$ copies of $Z$ in
\begin{equation*}
(\tr \underbrace{ZZ\cdots Z}_{\pi_1})
(\tr \underbrace{ZZ\cdots Z}_{\pi_2})
\cdots
(\tr \underbrace{ZZ\cdots Z}_{\pi_r}).
\end{equation*}
Then
\begin{equation}\label{eq:Extended}
\cnorm{Z}_{\X,d}= \bigg( \sum_{\vec{\pi} \,\vdash\, d} \frac{ \kappa_{\vec{\pi}}\TT_{\vec{\pi}}(Z)}{y_{\vec{\pi}}}\bigg)^{1/d}
\quad \text{for $Z \in \M_n$},
\end{equation} 
in which $\kappa_{\vec{\pi}}$ and $y_{\vec{\pi}}$ are defined in \eqref{eq:DefY}
and the sum runs over all partitions $\vec{\pi}$ of $d$, 
is a norm on $\M_n$ that restricts to the norm on $\h_n$ above.  In particular, $\cnorm{Z}_{\X,d}^d$ 
is a positive definite trace polynomial in $Z$ and $Z^*$.
\end{enumerate}
\end{theorem}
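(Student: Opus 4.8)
The plan is to deduce the $\M_n$ case entirely from the Hermitian case established in parts~(a)--(d), by representing $\cnorm{Z}_{\X,d}^d$ as an average over a one-parameter family of Hermitian matrices. For $Z \in \M_n$ and $\theta \in \R$, set
\[
H_\theta \;=\; e^{-i\theta}Z + e^{i\theta}Z^* \;\in\; \h_n ,
\]
which is Hermitian and additive in $Z$. The target identity is
\[
\cnorm{Z}_{\X,d}^d \;=\; \frac{1}{\binom{d}{d/2}}\cdot\frac{1}{2\pi}\int_0^{2\pi}\cnorm{H_\theta}_{\X,d}^d\,d\theta ,
\]
whose integrand is the Hermitian norm from part~(a). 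Once this holds, every norm property on $\M_n$ descends from the corresponding property on $\h_n$ by soft arguments.

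The heart of the matter, and the step I expect to be the main obstacle, is a circle-averaging identity for each partition $\vec{\pi}=(\pi_1,\dots,\pi_r)\vdash d$:
\[
\frac{1}{2\pi}\int_0^{2\pi}(\tr H_\theta^{\pi_1})(\tr H_\theta^{\pi_2})\cdots(\tr H_\theta^{\pi_r})\,d\theta \;=\; \binom{d}{d/2}\,\TT_{\vec{\pi}}(Z).
\]
To prove it I would expand each $H_\theta^{\pi_j}$ into words in $Z$ and $Z^*$: a word with $s$ adjoints and $\pi_j-s$ non-adjoints carries a phase $e^{i\theta(2s-\pi_j)}$, so a full product of traces with $S$ adjoints among its $d$ letters carries $e^{i\theta(2S-d)}$. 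Integrating over $\theta$ kills every contribution except those with $S=d/2$, that is, the placements of $d/2$ adjoints among the $d$ copies of $Z$; summing the survivors reproduces $\binom{d}{d/2}\,\TT_{\vec{\pi}}(Z)$ by the very definition of $\TT_{\vec{\pi}}$. Weighting by $\kappa_{\vec{\pi}}/y_{\vec{\pi}}$, summing over $\vec{\pi}\vdash d$, interchanging sum and integral, and recognizing that $\sum_{\vec{\pi}\vdash d}\kappa_{\vec{\pi}}y_{\vec{\pi}}^{-1}\prod_j\tr(H_\theta^{\pi_j}) = \cnorm{H_\theta}_{\X,d}^d$ by \eqref{eq:RealPermForm} applied to the Hermitian matrix $H_\theta$, then yields the integral representation. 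The delicate point is that the phase bookkeeping must match the combinatorial definition of $\TT_{\vec{\pi}}$ exactly; it is precisely this matching that pins down the normalizing constant $\binom{d}{d/2}$.

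Granting the representation, the norm axioms follow readily. Writing $d\mu = \binom{d}{d/2}^{-1}(2\pi)^{-1}d\theta$ for the resulting probability measure, we have $\cnorm{Z}_{\X,d} = \big\| \theta \mapsto \cnorm{H_\theta}_{\X,d}\big\|_{L^d(\mu)}$. Complex homogeneity is immediate from $H_\theta(cZ) = |c|\,H_{\theta-\arg c}(Z)$, homogeneity of the $\h_n$ norm, and $2\pi$-periodicity, yielding $\cnorm{cZ}_{\X,d}=|c|\,\cnorm{Z}_{\X,d}$. For subadditivity I would combine the pointwise Hermitian triangle inequality $\cnorm{H_\theta(Z)+H_\theta(W)}_{\X,d}\le \cnorm{H_\theta(Z)}_{\X,d}+\cnorm{H_\theta(W)}_{\X,d}$ with the triangle inequality (Minkowski's inequality) in $L^d(\mu)$. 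Positive definiteness is clear because the integrand is nonnegative and vanishes $\mu$-almost everywhere only if $H_\theta(Z)=0$ for all $\theta$ by continuity; evaluating at $\theta=0$ and $\theta=\pi/2$ forces $Z+Z^*=0$ and $Z=Z^*$, hence $Z=0$. Finally, $\cnorm{Z}_{\X,d}^d$ is real and nonnegative as the integral of a nonnegative function, and it is a trace polynomial in $Z$ and $Z^*$ directly from \eqref{eq:Extended}.

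It remains to confirm the restriction to $\h_n$, which can be seen two ways. Directly, if $Z=A=A^*$ then all placements of adjoints give the same product, so $\TT_{\vec{\pi}}(A)=p_{\vec{\pi}}(\vec{\lambda})$ and \eqref{eq:Extended} collapses to \eqref{eq:RealPermForm}. As a check on the constant, in this case $H_\theta = 2\cos\theta\,A$, so the representation reads $\cnorm{A}_{\X,d}^d = \binom{d}{d/2}^{-1}(2\pi)^{-1}\int_0^{2\pi}|2\cos\theta|^d\,d\theta\;\cnorm{A}_{\X,d}^d$, and the Wallis evaluation $\frac{1}{2\pi}\int_0^{2\pi}|\cos\theta|^d\,d\theta = \binom{d}{d/2}/2^d$ for even $d$ makes the prefactor exactly $1$, which simultaneously verifies the restriction and re-derives the normalization.
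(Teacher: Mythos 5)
Your argument addresses only part (e) of the theorem, and it does so by leaning on parts (a) and (c), which you never prove: the pointwise inequality $\cnorm{H_\theta(Z)+H_\theta(W)}_{\X,d}\le \cnorm{H_\theta(Z)}_{\X,d}+\cnorm{H_\theta(W)}_{\X,d}$ \emph{is} part (a), and the identification $\sum_{\vec{\pi}\vdash d}\kappa_{\vec{\pi}}\,y_{\vec{\pi}}^{-1}\prod_j \tr\big(H_\theta^{\pi_j}\big)=\cnorm{H_\theta}_{\X,d}^d$ \emph{is} part (c) applied to the Hermitian matrix $H_\theta$. Since the statement to be proved comprises (a)--(e), this is a genuine gap, and not a small one: part (a) is the deepest step. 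Minkowski's inequality in $L^d(\Omega,\F,\P)$ yields the triangle inequality only on real diagonal matrices, where $\LL(A+B)=\LL(A)+\LL(B)$; transferring it to all of $\h_n$, where eigenvalues are not additive, is done in the paper via Lewis's normal-decomposition-system framework \cite[Thm.~4.3]{LewisGroup}, a transfer-of-convexity argument from the diagonal subspace to $\h_n$ under unitary invariance. Likewise part (c) requires the cumulant computation $M_\Lambda(t)=\exp\big(\sum_{j\ge 1}\kappa_j\tr(A^j)\,t^j/j!\big)$ and the Bell-polynomial expansion, and parts (b) and (d) are not addressed at all. None of this can be waved through, because your entire construction for (e) sits on top of it.

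Within its scope, your part (e) argument is correct and is essentially the paper's own proof. The paper defines the extension by exactly your circle average, $\cnorm{Z}_{\X,d}^d=\binom{d}{d/2}^{-1}(2\pi)^{-1}\int_0^{2\pi}\cnorm{e^{it}Z+e^{-it}Z^*}_{\X,d}^d\,dt$, citing \cite[Prop.~15]{Aguilar} for the fact that such an average extends a norm on the $*$-fixed subspace to a norm on $\M_n$, and \cite[Lem.~16]{Aguilar} for the phase-averaging identity; your $e^{i\theta(2S-d)}$ bookkeeping, your direct verifications of homogeneity, subadditivity, and definiteness (via $\theta=0,\pi/2$), and your Wallis-integral check of the constant $\binom{d}{d/2}$ are correct reproofs of those two citations, which makes your version of (e) self-contained where the paper is not. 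One small inaccuracy: your measure $d\mu=\binom{d}{d/2}^{-1}(2\pi)^{-1}d\theta$ has total mass $1/\binom{d}{d/2}$, so it is not a probability measure; this is harmless, since Minkowski's inequality holds in $L^d$ of any measure space.
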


The independence of the $X_i$ is not needed in (a) and (d);
see Remarks \ref{Remark:Conjugation} and \ref{Remark:SchurNot}, respectively.
A more precise definition of $\TT_{\vec{\pi}}(Z)$ is in Subsection \ref{Subsection:ProofE},
although the examples in the next section better illustrate how to compute \eqref{eq:Extended}.

The positive definiteness of \eqref{eq:MainMGF}, \eqref{eq:MainBell}, and \eqref{eq:Extended}
is guaranteed by Theorem \ref{Theorem:Main};
that these expressions satisfy the triangle inequality
is difficult to verify directly.
Positivity itself is not obvious since
we consider the eigenvalues of $A \in \h_n$ and not their absolute values in (a) and (b).
Thus, these norms on $\h_n$ do not arise from singular values or the theory of symmetric gauge functions \cite[Sect.~7.4.7]{HJ}.
Norms like ours can distinguish singularly cospectral graphs,
unlike the operator, Frobenius, Schatten--von Neumann, and Ky Fan norms; see \cite[Ex.~2]{Aguilar}.

\subsection{Organization}
This paper is organized as follows.
We first cover a range of examples and applications in Section \ref{Section:Examples}, including
a generalization of Hunter's positivity theorem.
The proof of Theorem \ref{Theorem:Main}, which is lengthy and involves a variety of ingredients, 
is contained in Section \ref{Section:Proof}.
We end this paper in Section \ref{Section:Questions} with a list of open questions that demand further exploration.

\medskip\noindent\textbf{Acknowledgments.}  We thank Bruce Sagan for a helpful comment about monomial symmetric functions.

\section{Examples and Applications}\label{Section:Examples}
We begin with general computations for small $d$ (Subsection \ref{Subsection:Small}).
We examine Gamma random variables in Subsection \ref{Subsection:Gamma}.
A special case leads to a generalization of Hunter's positivity theorem 
(Subsection \ref{Subsection:Hunter}).
We examine norms arising from familiar probability distributions
in Subsections \ref{Subsection:Normal} -- \ref{Subsection:Pareto}.

\subsection{Generic computations}\label{Subsection:Small}
Let $\X=(X_1, X_2, \ldots, X_n)$, in which the $X_i$ are 
nondegenerate independent and identically distributed (iid) random variables
such that the stated cumulants and moments exist.
For $d=2$ and $4$, we obtain trace-polynomial representations of 
$\cnorm{Z}_{d}$ in terms of cumulants or moments.  This can also be done for $d=6,8,\ldots$, 
but we refrain from the exercise.

\begin{example}\label{Example:General2}
The two partitions of $d=2$ satisfy 
$\kappa_{(2)} = \kappa_2= \mu_2 - \mu_1^2$,
$\kappa_{(1,1)} = \kappa_1^2 = \mu_1^2$,
and $y_{(2)} = y_{(1,1)} = 2$.
There are $\binom{2}{1} = 2$ ways to place two adjoints ${}^*$ in a string of two $Z$s.  Therefore,
\begin{align*}
\TT_{(2)}(Z) &= \frac{1}{2} \big(\tr(Z^*Z)+\tr(ZZ^*) \big)  =  \tr(Z^*Z) \quad \text{and} \\
\TT_{(1,1)}(Z) &= \frac{1}{2}\big( (\tr Z^*)(\tr Z) +(\tr Z)(\tr Z^*) \big) =(\tr Z^*)(\tr Z) ,
\end{align*}
so
\begin{equation}\label{eq:cn2}
\cnorm{Z}_{\X,2}^2 
= \sum_{\vec{\pi} \,\vdash\, d} \frac{ \kappa_{\vec{\pi}}\TT_{\vec{\pi}}(A)  }{y_{\vec{\pi}}}
= \frac{\mu_2-\mu_1^2}{2} \tr(Z^*Z) + \frac{\mu_1^2}{2} (\tr Z^*)(\tr Z).
\end{equation}
If $\mu_1 = 0$ (mean zero), then $\cnorm{\cdot}_2$
is a nonzero multiple of the Frobenius norm since the variance $\mu_2-\mu_1^2$ 
is positive by nondegeneracy.  
As predicted by Theorem \ref{Theorem:Main}, 
the norm \eqref{eq:cn2} on $\M_n$ reduces to \eqref{eq:MainBell} on $\h_n$
since $B_2(x_1,x_2)=x_1^2+x_2$ and
{\small
\begin{equation*}
\cnorm{A}_{\X,2}^2
=\frac{1}{2}B_2( \kappa_1 \tr A, \kappa_2 \tr A^2)
= \frac{1}{2}\big[ (\kappa_1 \tr A)^2 + \kappa_2 \tr (A^2) \big] 
= \frac{\mu_2 - \mu_1^2}{2} \tr(A^2) + \frac{\mu_1^2}{2} (\tr A)^2,
\end{equation*}
}%
which agrees with \eqref{eq:cn2} if $Z = A = A^*$.
\end{example}

\begin{example}\label{Example:General4}
The five partitions of $d=4$ satisfy
\begin{equation*}
\kappa_{(4)} = \kappa_4,\quad
\kappa_{(3,1)} = \kappa_1 \kappa_3,\quad
\kappa_{(2,2)} = \kappa_2^2,\quad
\kappa_{(2,1,1)} = \kappa_2 \kappa_1^2,\quad 
\kappa_{(1,1,1,1)} = \kappa_1^4,
\end{equation*}
and
\begin{equation*}
y_{(4)} = 24,\quad
y_{(3,1)} = 6,\quad
y_{(2,2)} = 8,\quad
y_{(2,1,1)} = 4,\quad 
y_{(1,1,1,1)} = 24.
\end{equation*}
There are $\binom{4}{2} = 6$ ways to place two adjoints ${}^*$ in a string of four $Z$s.  
For example, 
\begin{align*}
6\TT_{(3,1)}(Z)
&= (\tr Z^*Z^*Z)(\tr Z) + (\tr Z^*ZZ^*)(\tr Z) + (\tr Z^*ZZ)(\tr Z^*) \\
&\qquad + (\tr ZZ^*Z^*)(\tr Z) +(\tr ZZ^*Z)(\tr Z^*) +(\tr ZZZ^*)(\tr Z^*)\\
&=3 \tr (Z^{*2}Z)(\tr Z) +3 (\tr Z^2 Z^*)(\tr Z^*).
\end{align*}
Summing over all five partitions yields the following norm on $\M_n$:
\begin{align}
\cnorm{Z}_{\X,4}^4
&=  \frac{1}{72}\big(
3 \kappa_1^4 (\tr Z^*)^2 (\tr Z)^2 
+3 \kappa_2 \kappa_1^2 (\tr Z^*)^2 \tr (Z^2) 
+3 \kappa_2 \kappa_1^2 \tr (Z^{*2})(\tr Z)^2  \nonumber \\
&\qquad +12 \kappa_2 \kappa_1^2  (\tr Z^*) (\tr Z^*Z) (\tr Z)
+6 \kappa_3 \kappa_1 \tr ( Z^{*2}Z)(\tr Z) \nonumber \\
&\qquad +6 \kappa_3 \kappa_1 \tr (Z^*) \tr (Z^* Z^2 ) 
+6 \kappa_2^2 (\tr  Z^*Z)^2
+3 \kappa_2^2 \tr (Z^2) \tr (Z^{*2}) \nonumber \\
&\qquad +2 \kappa_4 \tr (Z^2 Z^{*2})+\kappa_4 \tr (Z Z^* Z Z^*) \big). \label{eq:Z44}
\end{align}
If $Z = A\in \h_n$,
Theorem \ref{Theorem:Main}.c and \eqref{eq:Bell4} ensure that the above reduces to
\begin{equation*}
\frac{1}{24}\big(
\kappa _1^4 (\tr A)^4+6 \kappa _1^2 \kappa _2 \tr(A^2) (\tr A)^2
+4 \kappa _1 \kappa _3 \tr (A^3) \tr (A) 
+ 3 \kappa _2^2 \tr (A^2)^2+\kappa _4 \tr (A^4) \big).
\end{equation*}
\end{example}

\subsection{Gamma random variables}\label{Subsection:Gamma}
Let $\X=(X_1, X_2, \ldots, X_n)$, in which the $X_i$ are independent random variables with probability density
\begin{equation}\label{eq:Gamma}
f(t)=\begin{cases}
  \dfrac{1}{\beta^{\alpha} \Gamma(\alpha)} t^{\alpha - 1} e^{-t/\beta} & \text{if $t> 0$},\\[8pt] 
    0 & \text{if $t\leq 0$}.
    \end{cases}
\end{equation}  
Here $\alpha, \beta>0$ ($\alpha = k/2$ and $\beta = 2$ yield a chi-squared
random variable with $k$ degrees of freedom; $\alpha=\beta=1$ is the standard exponential distribution). Then
\begin{equation*}
M(t) = (1 - \beta t)^{-\alpha}
\quad \text{and} \quad
K(t) = - \alpha \log(1 - \beta t),
\end{equation*}
so 
\begin{equation}\label{eq:KappaGamma}
\kappa_r = \alpha \beta^r(r-1)! \quad \text{for $r \in \N$}.
\end{equation}
For even $d\geq 2$,
\begin{equation}\label{eq:GammaMGF}
\cnorm{A}_{\X,d}^d 
= [t^d] \prod_{i=1}^n \frac{1}{(1-\beta \lambda_i t)^{\alpha}}
=[t^d] \Bigg(\frac{1}{\beta^n t^n p_A(\beta^{-1}t^{-1})}\Bigg)^{\alpha}
\quad \text{for $A \in \h_n$},
\end{equation} 
in which $p_A(t) = \det(tI-A)$ denotes the characteristic polynomial of $A$.

\begin{example}\label{Example:Gamma4}
Since $\kappa_1 = \alpha \beta$ and $\kappa_2 = \alpha \beta^2$, \eqref{eq:cn2} becomes
\begin{equation*}
\cnorm{Z}_{\X,2}^2
= \frac{1}{2} \alpha  \beta^2 \tr (Z^*Z) + \frac{1}{2} \alpha^2 \beta^2 (\tr Z^*)(\tr Z)
\quad \text{for $Z\in \M_n$}.
\end{equation*}
Similarly, \eqref{eq:Z44} yields
\begin{align*}
\cnorm{Z}_{\X,4}^4
&= \frac{1}{24}\big( 
\alpha^4 \beta^4 (\tr Z)^2 (\tr Z^*)^2
+\alpha^3 \beta^4 (\tr Z^*)^2 \tr(Z^2) 
\\
&\qquad +4 \alpha^3 \beta^4 (\tr Z)(\tr Z^*)(\tr Z^* Z)
+2 \alpha^2 \beta^4 (\tr Z^* Z)^2 \\
&\qquad
+\alpha^3 \beta^4 (\tr Z)^2 \tr(Z^{*2})
+\alpha^2 \beta^4 \tr (Z^2) \tr (Z^{*2})
\\
&\qquad
+4 \alpha^2 \beta^4 \tr(Z^*)\tr (Z^* Z^2)
+4 \alpha^2 \beta^4 \tr (Z) \tr (Z^{*2} Z)
\\
&\qquad
+2 \alpha  \beta^4 \tr (Z^* Z Z^* Z)
+4 \alpha  \beta^4 \tr (Z^{*2} Z^2)
\big).
\end{align*}
These formulae generalize \cite[eq.~8 \& 9]{Aguilar}, which correspond to $\alpha = \beta = 1$.
\end{example}

\subsection{A generalization of Hunter's positivity theorem}\label{Subsection:Hunter}
Examining a special case of the gamma distribution (Subsection \ref{Subsection:Gamma}) 
recovers Hunter's theorem \cite{Hunter} (Corollary \ref{Corollary:Hunter} below)
and also establishes a powerful generalization (Theorem \ref{Theorem:GeneralHunter}).

\begin{example}\label{Example:Exponential}
Let $\alpha=\beta=1$ in \eqref{eq:Gamma} and \eqref{eq:GammaMGF}.  Then
\begin{equation}\label{eq:CHS1}
\cnorm{A}_{\X,d}^2
= [t^d] \prod_{i=1}^n\frac{1}{1-\lambda_it}=[t^d] \frac{1}{t^np_A(t^{-1})}
\quad \text{for $A \in \h_n$},
\end{equation} 
which is \cite[Thm.~20]{Aguilar}.
Expand each factor $(1 - \lambda_i t)^{-1}$
as a geometric series, multiply out the result, and deduce that for $d \geq 2$ even,
\begin{equation}\label{eq:CHS2}
\cnorm{A}_{\X,d}^d 
=[t^d] \prod_{i=1}^n\frac{1}{1-\lambda_it}
=[t^d] \sum_{r=0}^{\infty} h_r(\lambda_1, \lambda_2, \ldots, \lambda_n) t^r.
\end{equation}
From \eqref{eq:KappaGamma}, we have $\kappa_i=(i-1)!$. Therefore,
\begin{equation*}
\frac{\kappa_{\vec{\pi}}}{y_{\vec{\pi}}}=\frac{\prod_{i\geq 1} \big[(i-1)!\big]^{m_i}}{\prod_{i\geq 1} (i!)^{m_i} m_i!}=\frac{1}{\prod_{i\geq 1}i^{m_i}m_i!}
\end{equation*} for any partition $\vec{\pi}$. Theorem \ref{Theorem:Main} and \eqref{eq:DefY} imply that for even $d\geq 2$ and $A \in \h_n$,
\begin{equation}\label{eq:StanleyPowerSum}
h_d(\lambda_1, \lambda_2, \ldots, \lambda_n)
= \cnorm{A}_{\X,d}^d=\sum_{\vec{\pi}\vdash d}\frac{\kappa_{\vec{\pi}}p_{\vec{\pi}}}{y_{\vec{\pi}}}
=\sum_{\vec{\pi}\vdash d}\frac{p_{\vec{\pi}}}{z_{\vec{\pi}}} ,
\end{equation} 
in which $z_{\vec{\pi}}=\prod_{i\geq 1}i^{m_i}m_i!$ and $p_{\vec{\pi}}$ is given by \eqref{eq:pTrace}.
This recovers the combinatorial representation of even-degree CHS polynomials \cite[Prop.~7.7.6]{StanleyBook2} and establishes Hunter's positivity theorem since $\cnorm{\cdot}_{\X,d}^d$ is positive definite.
\end{example}

The next theorem generalizes Hunter's theorem \cite{Hunter}, which is the case $\alpha = 1$.

\begin{theorem}\label{Theorem:GeneralHunter}
For even $d\geq 2$ and $\alpha \in \N$, 
\begin{equation*}
H_{d,\alpha}(x_1,x_2, \ldots, x_n)= \sum_{\substack{\vec{\pi}\vdash d\\ |\vec{\pi}|\leq\alpha}} 
c_{\vec{\pi}} h_{\vec{\pi}}(x_1,x_2,\ldots,x_n)
\end{equation*}
is positive definite on $\R^n$, in which 
the sum runs over all partitions $\vec{\pi}=(\pi_1,\pi_2,\ldots,\pi_r)$ of $d$.
Here, $h_{\vec{\pi}}=h_{\pi_1}h_{\pi_2}\cdots h_{\pi_r}$ is a product of complete homogeneous symmetric 
polynomials and
\begin{equation*}
c_{\vec{\pi}} = \frac{\alpha !}{ (\alpha-|\vec{\pi}|)! \prod_{i=1}^r m_i!},
\end{equation*}
where $|\vec{\pi}|$ denotes the number of parts in $\vec{\pi}$
and $m_i$ is the multiplicity of $i$ in $\vec{\pi}$.
\end{theorem}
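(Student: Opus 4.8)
The plan is to realize $H_{d,\alpha}$ as the $d$th power of one of the gamma-distribution norms from Subsection \ref{Subsection:Gamma} and then invoke the positive definiteness guaranteed by Theorem \ref{Theorem:Main}. Concretely, I would fix $\beta = 1$ and take $\alpha \in \N$ in \eqref{eq:Gamma}, so that $\X = (X_1, X_2, \ldots, X_n)$ consists of iid gamma random variables; these are nondegenerate, admit a moment generating function, and lie in every $L^d$, so the hypotheses of Theorem \ref{Theorem:Main} are met. By \eqref{eq:GammaMGF} with $\beta = 1$,
\begin{equation*}
\cnorm{A}_{\X,d}^d = [t^d] \prod_{i=1}^n \frac{1}{(1 - \lambda_i t)^{\alpha}} = [t^d] \Bigg( \prod_{i=1}^n \frac{1}{1 - \lambda_i t} \Bigg)^{\!\alpha}
\end{equation*}
for each $A \in \h_n$ with eigenvalue vector $\LL = (\lambda_1, \lambda_2, \ldots, \lambda_n)$.

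Next I would substitute the generating-function identity $\prod_{i=1}^n (1 - \lambda_i t)^{-1} = \sum_{r \geq 0} h_r(\LL)\, t^r$ recorded in \eqref{eq:CHS2}, raise it to the integer power $\alpha$, and extract the coefficient of $t^d$. Since $\alpha \in \N$, expanding the product yields
\begin{equation*}
\cnorm{A}_{\X,d}^d = \sum_{\substack{r_1 + \cdots + r_\alpha = d \\ r_j \geq 0}} h_{r_1}(\LL)\, h_{r_2}(\LL) \cdots h_{r_\alpha}(\LL),
\end{equation*}
a sum over compositions of $d$ into $\alpha$ nonnegative parts, where $h_0 = 1$ absorbs the zero parts.

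The key bookkeeping step is to collate these compositions according to their underlying partition. Each composition, once its zero entries are discarded and its nonzero entries are sorted into nonincreasing order, produces a partition $\vec{\pi} \vdash d$ with $|\vec{\pi}| \leq \alpha$, and the corresponding product equals $h_{\vec{\pi}}(\LL)$. Conversely, the number of length-$\alpha$ compositions that collapse to a fixed $\vec{\pi} = (\pi_1, \ldots, \pi_r)$ is the number of distinct arrangements of the multiset consisting of the parts $\pi_1, \ldots, \pi_r$ together with $\alpha - |\vec{\pi}|$ zeros; this multinomial count is precisely $\alpha! \big/ \big( (\alpha - |\vec{\pi}|)!\, \prod_{i} m_i! \big) = c_{\vec{\pi}}$, where the zeros contribute the factor $(\alpha - |\vec{\pi}|)!$ and the parts of value $i$ contribute $m_i!$. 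Hence $\cnorm{A}_{\X,d}^d = H_{d,\alpha}(\LL)$.

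Finally, since $\cnorm{\cdot}_{\X,d}$ is a norm on $\h_n$ by Theorem \ref{Theorem:Main}, the quantity $H_{d,\alpha}(\LL)$ vanishes only when $A = 0$ and is strictly positive otherwise; taking $A = \diag(x_1, x_2, \ldots, x_n)$ gives $H_{d,\alpha}(x_1, x_2, \ldots, x_n) > 0$ for every nonzero $(x_1, x_2, \ldots, x_n) \in \R^n$, which is the asserted positive definiteness. The case $\alpha = 1$ recovers $H_{d,1} = h_d$ and hence Hunter's theorem, matching Example \ref{Example:Exponential}. The only genuine obstacle is the combinatorial identification of the composition-counting multinomial with $c_{\vec{\pi}}$; everything else is a direct appeal to the gamma computation \eqref{eq:GammaMGF} and the positive definiteness already established in Theorem \ref{Theorem:Main}.
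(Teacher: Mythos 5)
Your proposal is correct and takes essentially the same approach as the paper: both realize $H_{d,\alpha}$ as $\cnorm{A}_{\X,d}^d$ for a random vector of iid gamma variables with $\beta = 1$ and integer shape $\alpha$, expand $\big(\sum_{r\geq 0} h_r t^r\big)^{\alpha}$ via \eqref{eq:GammaMGF} and \eqref{eq:CHS2}, and then invoke the positive definiteness supplied by Theorem \ref{Theorem:Main}. Your explicit multinomial count of the compositions collapsing to a given partition $\vec{\pi}$ is exactly the content of the paper's identity \eqref{eq:PPCombo} for the polynomials $P_{\ell}^{(\alpha)}$, so the two arguments coincide step for step.
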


\begin{proof}
Let $\alpha\in \N$ and define polynomials $P_{\ell}^{(\alpha)}(x_1, x_2, \ldots, x_{\ell})$ by
\begin{equation}\label{eq:PPalpha}
P_0^{(\alpha)}=x_0=1
\quad \text{and} \quad 
\Big(1+ \sum_{r=1}^{\infty} x_r t^r \Big)^{\alpha}
=\sum_{\ell=0}^{\infty}P_{\ell}^{(\alpha)}(x_1, x_2, \ldots, x_{\ell})t^{\ell}.
\end{equation} 
Then
\begin{equation}\label{eq:PPCombo}
P_{\ell}^{(\alpha)}( x_1,x_2, \ldots, x_{\ell})
\,\,=\!\!
\sum_{\substack{ i_1, i_2, \ldots, i_{\alpha}\leq \ell \\ i_1+i_2+\cdots+i_{\alpha}=\ell}} x_{i_1}x_{i_2}\cdots x_{i_{\alpha}}
=\sum_{\substack{\vec{\pi}\vdash \ell \\ |\vec{\pi}|\leq\alpha}} c_{\vec{\pi}} x_{\vec{\pi}}.
\end{equation}
Let $\X$ be a random vector whose $n$ components are iid random variables
distributed according to \eqref{eq:Gamma} with $\beta = 1$.
Let $A\in \h_n$ have eigenvalues $x_1,x_2,\ldots,x_n$.
Then for even $d \geq 2$,
\begin{align*}
\cnorm{A}_{\X,d}^d
&\overset{\eqref{eq:GammaMGF}}{=} [t^d] \bigg( \prod_{i=1}^k \frac{1}{1-x_i t} \bigg)^{\alpha} \\
&\overset{\eqref{eq:CHS2}}{=} [t^d] \bigg(1+\sum_{r=1}^{\infty} h_{r}(x_1, x_2, \ldots, x_n) t^{r} \bigg)^{\alpha} \\
&\overset{\eqref{eq:PPalpha}}{=} [t^d] \sum_{\ell=0}^{\infty} P_{\ell}^{(\alpha)}(h_1, h_2, \ldots, h_{\ell})t^{\ell} \\
&\overset{\eqref{eq:PPCombo}}{=} [t^d]\sum_{\ell=0}^{\infty}
\bigg(
\sum_{\substack{\vec{\pi}\vdash \ell \\ |\vec{\pi}|\leq\alpha}} c_{\vec{\pi}} h_{\vec{\pi}}(x_1, x_2, \ldots, x_n) \bigg)t^{\ell}.
\end{align*}
Consequently,
$\displaystyle
\sum_{\substack{\vec{\pi}\vdash d \\ |\vec{\pi}|\leq \alpha}} c_{\vec{\pi}} h_{\vec{\pi}}(x_1, x_2, \ldots, x_n) 
 = \cnorm{A}_{\X,d}^d$,
which is positive definite.
\end{proof}

As mentioned in Example \ref{Example:Exponential}, $\alpha = 1$ recovers
Hunter's theorem.

\begin{corollary}[Hunter \cite{Hunter}]\label{Corollary:Hunter}
For even $d \geq 2$, the complete symmetric homogeneous polynomial 
$h_d(x_1,x_2,\ldots,x_n)$ is positive definite.
\end{corollary}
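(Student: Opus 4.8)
The plan is to obtain this statement as the degenerate case $\alpha = 1$ of Theorem \ref{Theorem:GeneralHunter}, which is already in hand. First I would substitute $\alpha = 1$ into the definition of $H_{d,\alpha}$ and determine which partitions survive the summation. The constraint $|\vec{\pi}| \leq \alpha = 1$ forces $\vec{\pi}$ to consist of a single part, so the only partition of $d$ that contributes is $\vec{\pi} = (d)$. For that partition, the multiplicity of $d$ is $1$ and all other multiplicities vanish, so the coefficient reduces to $c_{(d)} = 1!/(0!\cdot 1!) = 1$, while $h_{(d)} = h_d$. Hence $H_{d,1} = h_d$, and the positive definiteness asserted in Theorem \ref{Theorem:GeneralHunter} transfers verbatim to $h_d$ on $\R^n$.

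An equivalent route, which avoids invoking the full generalization, is to read the conclusion directly off Example \ref{Example:Exponential}. Taking $\X$ to have iid standard exponential components ($\alpha = \beta = 1$), equation \eqref{eq:StanleyPowerSum} gives $h_d(\lambda_1, \lambda_2, \ldots, \lambda_n) = \cnorm{A}_{\X,d}^d$ for every $A \in \h_n$ whose eigenvalue vector is $\LL$. Since any real tuple $(x_1, x_2, \ldots, x_n)$ arises as the eigenvalues of some $A \in \h_n$ (for instance a real diagonal matrix), and since $\cnorm{\cdot}_{\X,d}$ is a genuine norm on $\h_n$ by Theorem \ref{Theorem:Main}, its $d$th power is positive away from the origin. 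This furnishes the positivity of $h_d$ at every nonzero point of $\R^n$.

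I do not anticipate a genuine obstacle at this stage: the substantive content—the probabilistic construction of the norm, the verification of the triangle inequality underlying Theorem \ref{Theorem:Main}, and the combinatorial identity \eqref{eq:PPCombo} feeding Theorem \ref{Theorem:GeneralHunter}—has already been carried out. The only point demanding any care is the routine check that the $\alpha = 1$ specialization collapses the partition sum to the lone term $h_d$, i.e.\ the computation $c_{(d)} = 1$; everything else is immediate.
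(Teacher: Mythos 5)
Your proposal is correct and takes essentially the same route as the paper: the paper presents Corollary \ref{Corollary:Hunter} precisely as the $\alpha=1$ specialization of Theorem \ref{Theorem:GeneralHunter}, having already observed in Example \ref{Example:Exponential} that \eqref{eq:StanleyPowerSum} exhibits $h_d(\lambda_1,\ldots,\lambda_n)$ as $\cnorm{A}_{\X,d}^d$ for the standard exponential distribution, whence positivity follows from the norm property. Both of your routes coincide with the paper's, and your check that the partition sum collapses to the single term $c_{(d)}h_d$ with $c_{(d)}=1!/(0!\cdot 1!)=1$ is the only computation required.
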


\begin{example} If $\alpha = 2$, then we obtain the positive definite symmetric polynomial
\begin{equation*}
H_{d,2}(x_1,x_2, \ldots, x_n)= \sum_{i=0}^d h_i (x_1,x_2, \ldots, x_n) h_{d-i}(x_1,x_2, \ldots, x_n).
\end{equation*}
\end{example}

\begin{example}
The relation
\begin{equation*}
\sum_{\ell=0}^{\infty} H_{\ell, \alpha}t^{\ell}=\Big(\sum_{\ell=0}^{\infty} h_{\ell}t^{\ell}\Big)\Big(\sum_{\ell=0}^{\infty} H_{\ell, \alpha-1}t^{\ell}\Big)
\end{equation*} implies that the sequence $\{H_{d,\alpha}\}_{\alpha\geq 1}$ satisfies the recursion
\begin{equation}
H_{d,\alpha}=\sum_{i=0}^d h_i H_{d-i, \alpha-1}.\label{eq:HRecursion}
\end{equation}
For example, let $j=4$ and $\alpha=3$. There are four partitions $\vec{\pi}$ of $j$ with $|\vec{\pi}|\leq 3$. These are $(1,1,2)$,  $(1,3)$, $(2,2)$ and $(4)$. 
Therefore,
\begin{align*}
H_{4,3}(x_1, x_2, x_3,x_4)&=c(1,1,2)h_1^2h_2+c(1,3)h_1h_3+c(2,2)h_2^2+c(4)h_4\\
&=\frac{3!}{0! 2! 1!}h_1^2h_2+\frac{3!}{1! 1! 1!}h_1h_3+\frac{3!}{1! 2!}h_2^2+\frac{3!}{2!1!}h_4\\[2pt]
&=3h_1^2h_2+6h_1h_3+3h_2^2+3h_4
\end{align*} 
is a positive definite symmetric polynomial. In light of \eqref{eq:HRecursion}, we can also write
\begin{equation*}
H_{4,3}(x_1, x_2, x_3,x_4)=\sum_{i=0}^4 h_i H_{4-i, 2}=H_{4,2}+h_1H_{3,2}+h_2H_{2,2}+h_3H_{1,2}+h_4.
\end{equation*}
\end{example}


\subsection{Normal random variables}\label{Subsection:Normal}
    Let $\X=(X_1, X_2, \ldots, X_n)$, in which the $X_i$ are independent 
    normal random variables with mean $\mu$ and variance $\sigma^2>0$.  Then
    \begin{equation*}
      M(t)=\exp\!\Big(t\mu+\frac{\sigma^2t^2}{2} \Big)
      \quad \text{and} \quad
      K(t) = \frac{\sigma^2 t^2}{2}+\mu  t;
    \end{equation*} 
    in particular, $\kappa_1 = \mu$ and $\kappa_2 = \sigma^2$ and all higher cumulants are zero.    
    Then
    \begin{equation*}
    	M_{\X,\LL}(t) 
	= \prod_{i=1}^n \exp\!\Big(\lambda_i t\mu+\frac{\sigma^2 \lambda_i^2 t^2}{2} \Big)
	=\exp\! \Big( t\mu\tr A+\frac{\sigma^2\tr(A^2)t^2}{2} \Big).
    \end{equation*}
    Theorem \ref{Theorem:Main} and the above tell us that
    \begin{equation}\label{eq:Same}
        \cnorm{A}_{\X,d}^d=\sum_{k=0}^{\frac{d}{2}}  \frac{\mu^{2k} (\tr A)^{2k}}{(2k)!} 
        \cdot  \frac{\sigma^{d-2k} \norm{A}_{\operatorname{F}}^{d-2k}}{2^{\frac{d}{2}-k} (\frac{d}{2}-k)!}
        \quad \text{for $A \in \h_n$},
    \end{equation}      
    in which $\norm{A}_{\operatorname{F}}$ is the Frobenius norm of $A$.
    For $d\geq 2$ even, Theorem \ref{Theorem:Main} yields
    \begin{align*}
    	\cnorm{Z}_{\X,2}^2
	&= \frac{1}{2} \sigma^2 \tr(Z^*\!Z) + \frac{1}{2} \mu^2 (\tr Z^*)(\tr Z), \\
	\cnorm{Z}_{\X,4}^4
	&= \frac{1}{24} \Big(
	\mu^4 (\tr Z)^2 (\tr Z^*)^2
	+\mu^2 \sigma^2 \tr (Z^*)^2\tr (Z^2) \\
	&\qquad\qquad +4 \mu^2 \sigma^2 (\tr Z)  (\tr Z^*) (\tr Z^* Z)
	+2 \sigma^4 (\tr Z^* Z)^2
	\\
	&\qquad\qquad +\mu^2 \sigma^2 (\tr Z)^2 \tr (Z^{*2})
	+\sigma^4 \tr (Z^2) \tr (Z^{*2})
	\Big).
    \end{align*}
    Since $\kappa_r=0$ for $r \geq 3$, we see that
    $\cnorm{Z}_{\X,4}^4$ does not contain summands like
    $\tr(Z^*)\tr (Z^* Z^2)$ and $\tr (Z^{*2} Z^2)$, in contrast to the formula in Example \ref{Example:Gamma4}.

\subsection{Uniform random variables}\label{Subsection:Uniform}
Let $\X=(X_1, X_2, \ldots, X_n)$, where the $X_i$ are independent and uniformly distributed on $[a,b]$. 
Each $X_i$  has probability density $f(x)=(b-a)^{-1}\mathbbm{1}_{[a,b]}$, where $\mathbbm{1}_{[a,b]}$ is the indicator function of $[a,b]$. Then 
\begin{equation}\label{eq:MomentUniform}
\mu_k = \E  [X_i^k] = \int_{-\infty}^{\infty} x^k f(x)\,dx= \frac{h_k(a,b)}{k+1},
\end{equation} 
in which $h_k(a,b)$ is the CHS polynomial of degree $k$ in the variables $a,b$. The moment and cumulant generating functions of each $X_i$ are
\begin{equation*}
M(t)=\frac{e^{bt}-e^{at}}{t(b-a)}
\quad \text{and} \quad
K(t) = \log \bigg(\frac{e^{t (b-a)}-1}{t (b-a)}\bigg)+a t.
\end{equation*}
The cumulants are
\begin{equation*}
\kappa_r =  
\begin{cases}
\dfrac{a+b}{2} & \text{if $r=1$},\\[8pt]
\dfrac{B_r}{r}(b-a)^r & \text{if $r$ is even},\\[8pt]
0 & \text{otherwise},
\end{cases}
\end{equation*}
in which $B_r$ is the $r$th Bernoulli number \cite{Gould}. 
Theorem \ref{Theorem:Main} ensures that 
\begin{equation}\label{eq:GeneralSinh}
\cnorm{A}_{\X,d}^d = [t^d]\prod_{i=1}^n\frac{ e^{b\lambda_i t}-e^{a\lambda_i t}}{\lambda_it(b-a)}
\quad \text{for $A \in \h_n$}.
\end{equation}

\begin{example}\label{Example:UniformSymmetric}
If $[a,b]=[-1,1]$, then 
\begin{equation*}
\cnorm{Z}_{\X,4}^4= \frac{1}{1080}\big(
10 (\tr Z^*Z)^2+5 \tr (Z^2) \tr (Z^{*2})-4 (\tr Z^2Z^{*2})-2 \tr (ZZ^*ZZ^*)\big)
\end{equation*}
for $Z \in \M_n$, which is not obviously positive, let alone a norm.  
Indeed, $\tr Z^2Z^{*2}$ and $\tr (ZZ^*ZZ^*)$ appear with negative scalars in front of them!
Similarly,
\begin{equation*}
\cnorm{A}_{\X,6}^6
= \frac{1}{45360}
\big(35 (\tr A^2)^3-42 \tr(A^4) \tr (A^2)+16 \tr(A^6) \big)
\quad \text{for $A \in \h_6$}
\end{equation*}
has a nonpositive summand.
Observe that
\begin{equation*}
M_{\X,\LL}(t)= \prod_{i=1}^n\frac{\sinh(\lambda_it)}{\lambda_i t} 
\end{equation*}
is an even function of each $\lambda_i$, so the corresponding norms are polynomial functions
in even powers of the eigenvalues (so positive definiteness is no surprise, although the triangle inequality 
for these norms remains nontrivial).  
\end{example}

\begin{example}
If $[a,b]=[0,1]$, then
\begin{equation*}
M_{\X,\LL}(t) = \prod_{i=1}^n \frac{e^{\lambda_i t}-1}{\lambda_i t} ,
\end{equation*}
and hence for $A \in \h_n$,
\begin{align*}
\cnorm{A}_{\X,2}^2  &= \tfrac{1}{12}(2 \lambda_1^2+3 \lambda_1 \lambda_2+2 \lambda_2^2), \\
\cnorm{A}_{\X,4}^4  &= \tfrac{1}{720}( 6 \lambda_1^4
+15 \lambda_1^3 \lambda_2
+20 \lambda_1^2 \lambda_2^2 
+15 \lambda_1 \lambda_2^3
+6 \lambda_2^4  
).
\end{align*}
Unlike the previous example, these symmetric polynomials are not obviously positive definite
since $\lambda_1^3 \lambda_2$ and $\lambda_1 \lambda_2^3$ need not be nonnegative.
\end{example}


\subsection{Laplace random variables}\label{Subsection:Laplace}
Let $\X=(X_1, X_2, \ldots, X_n)$, where 
the $X_i$ are independent random variables distributed according to the probability density
\begin{equation*}
f(x)=\frac{1}{2\beta} e^{-\frac{|x-\mu|}{\beta}},
\quad \text{in which $\mu \in \R$ and $\beta>0$}.
\end{equation*} 
The moment and cumulant generating functions of the $X_i$ are
\begin{equation*}
M(t)=\frac{e^{\mu t}}{1-\beta^2t^2}
\quad \text{and} \quad
K(t) = \mu  t-\log (1-\beta^2 t^2),
\end{equation*} 
respectively.
The cumulants are
\begin{equation*}
\kappa_r = 
\begin{cases}
\mu & \text{if $r=1$},\\[2pt]
2 \beta^r (r-1)! & \text{if $r$ is even},\\[2pt]
0 & \text{otherwise}.
\end{cases}
\end{equation*}
For even $d\geq 2$, it follows that $\cnorm{A}_{\X,d}^d$ is the $d$th term in the Taylor expansion of 
\begin{equation}\label{eq:Laplace}
\cnorm{A}_{\X,d}^d 
= [t^d] \prod_{i=1}^n\frac{ e^{\mu t}}{1-\beta^2\lambda_i^2t^2}.
=  e^{\mu \tr A t} [t^d]\prod_{i=1}^n\frac{ 1}{1-\beta^2\lambda_i^2t^2}.
\end{equation} 

\begin{example}
Let $\mu=\beta=1$. Expanding the terms in \eqref{eq:Laplace} gives
\begin{equation*}
M_{\X,\LL}(t)=e^{ \tr A t}\prod_{i=1}^n\frac{ 1}{1-\lambda_i^2t^2}=\Big(\sum_{k=0}^{\infty} (\tr A)^k\frac{t^k}{k!}  \Big)\Big(\sum_{k=0}^{\infty} h_k(\lambda_1^2, \lambda_2^2, \ldots, \lambda_n^2)t^{2k}    \Big),
\end{equation*}
which implies
\begin{equation*}
\cnorm{A}_{\X,d}^d=\sum_{k=0}^{d/2} \frac{(\tr A)^{2k}}{(2k)!} h_{\frac{d}{2}-k}(\lambda_1^2, \lambda_2^2, \ldots, \lambda_n^2).
\end{equation*} 
\end{example}


\subsection{Bernoulli random variables}\label{Subsection:Bernoulli}
Let $\X=(X_1, X_2, \ldots, X_n)$,  in which the $X_i$ are independent Bernoulli random variables. Each $X_i$ takes values in $\{0,1\}$ 
with $\P(X_i=1)=q$ and $\P(X_i=0)=1-q$ for some fixed $0<q<1$. Each $X_i$ satisfies
\begin{equation*}
\E [X_i^k]=\sum_{j\in \{0,1\}} j^k\P(X_i=j)=q \quad \text{for $k \in \N$}.
\end{equation*} We have
\begin{equation*}
M(t) = 1-q + qe^t 
\quad \text{and} \quad
K(t) = \log(1-q+qe^t).
\end{equation*} The first few cumulants are 
\begin{equation*}
q, \qquad q-q^2, \qquad 2 q^3-3 q^2+q, \qquad -6 q^4+12 q^3-7 q^2+q,\ldots.
\end{equation*}

For even $d\geq 2$, the multinomial theorem and independence imply that
\begin{equation*}
\cnorm{A}_{\X,d}^d
=\frac{1}{d!} \sum_{i_1+ i_2+\cdots + i_n=d} q^{|I|} \lambda_1^{i_1}\lambda_2^{i_2}\cdots \lambda_n^{i_n},
\end{equation*} 
in which $|I|$ denotes the cardinality of $I=\{i_1, i_2, \ldots, i_n\}$. 
We can write this as
\begin{equation*}
\cnorm{A}_{\X,d}^d=\sum_{\vec{\pi}\, \vdash \, d} \frac{|\vec{\pi}|!}{d!}q^{| \vec{\pi}|} m_{\vec{\pi}}(\vec{\lambda}),
\end{equation*} 
in which $m_{\vec{\pi}}$ denotes the \emph{monomial symmetric polynomial} 
corresponding to the partition $\vec{\pi}$ of $d$ \cite[Sect.~7.3]{StanleyBook2}.
To be more specific,
\begin{equation*}
m_{\vec{\pi}}(\vec{x})=\sum_{\vec{\alpha}} x^{\vec{\alpha}},
\end{equation*} 
in which the sum is taken over all distinct permutations $\vec{\alpha}=(\alpha_1, \alpha_2, \ldots, \alpha_r)$ of the entries of $\vec{\pi}=(i_1, i_2, \ldots, i_r)$ and $x^{\vec{\alpha}}=x_1^{\alpha_1}x_2^{\alpha_2}\cdots x_r^{\alpha_r}$. For example,
\begin{equation*}
m_{(1)} =\sum_i x_i, \qquad 
m_{(2)} =\sum_i x_i^2,\quad \text{and} \quad
m_{(1,1)}=\sum_{i<j}x_ix_j.
\end{equation*}

\subsection{Finite discrete random variables}\label{Subsection:Finite}

Let $X$ be supported on $\{a_1, a_2, \ldots, a_{\ell}\} \subset \R$, with
$\P(X=a_j)=q_j>0$ for $1\leq j \leq \ell$ and $q_1+q_2+\cdots +q_{\ell}=1$. Then
\begin{equation*}
\E[X^k]=\sum_{i=1}^{\ell} a_i^k q_i,
\end{equation*} 
and hence
\begin{equation}
M(t)=\sum_{j=1}^{\ell}q_j\bigg(\sum_{k=0}^{\infty}  a_j^k \frac{t^k}{k!}\bigg)=\sum_{j=1}^{\ell} q_j e^{a_jt}.\label{eq:FiniteDiscrete}
\end{equation} 
Let $\X=(X_1, X_2, \ldots, X_n)$, in which $X_1, X_2, \ldots, X_n\sim X$ are iid random variables.

\begin{example}\label{Example:Rademacher}
Let $\ell=2$ and $a_1=-a_2=1$ with $q_1=q_2=\frac{1}{2}$. 
The $X_i$ are \emph{Rademacher} random variables. Identity 
\eqref{eq:FiniteDiscrete} implies that $M(t)=\cosh t$, so
    \begin{equation*}
    	M_{\X,\LL}(t) = \prod_{i=1}^n\cosh(\lambda_it).
    \end{equation*}
    For $n=2$, 
    \begin{align*}
    \cnorm{A}_{\X,2}^2  &=  \tfrac{1}{2} (\lambda_1^2+\lambda_2^2), \\
\cnorm{A}_{\X,4}^4  &= \tfrac{1}{24} (\lambda_1^4+6 \lambda_2^2 \lambda_1^2+\lambda_2^4), \quad \text{and}\\
\cnorm{A}_{\X,6}^6  &=\tfrac{1}{720} (\lambda_1^6+15 \lambda_2^2 \lambda_1^4+15 \lambda_2^4 \lambda_1^2+\lambda_2^6).
    \end{align*}
\end{example}

Let $\gamma_p=\sqrt{2} (\sqrt{\pi})^{-1/p }\Gamma(\frac{p+1}{2})^{1/p}$ 
denote the $p$th moment of a standard normal random variable.  Let $X_1, X_2, \ldots, X_n$ be independent Rademacher random variables (see Example \ref{Example:Rademacher}). The classic Khintchine inequality asserts that 
\begin{equation}\label{eq:Classic}
\Big(\E \Big|\sum_{i=1}^n \lambda_iX_i \Big|^2\Big)^{1/2}\leq \Big(\E \Big|\sum_{i=1}^n \lambda_iX_i\Big|^p\Big)^{1/p}\leq a_p\Big(\E \Big|\sum_{i=1}^n \lambda_iX_i\Big|^2\Big)^{1/2}
\end{equation} 
for all $\lambda_1, \lambda_2, \ldots, \lambda_n\in \R$ and $p\geq 2$, with $a_2=1$  and $a_p=\gamma_p$ for $p>2$. Moreover, these constants are optimal \cite{Haagerup}. Immediately, we obtain the equivalence of norms
\begin{equation}\label{eq:NormEquivalence}
\norm{A}_{\mathrm{F}}\leq \Gamma(p+1)^{1/p}\cnorm{A}_{\X,p}\leq a_p\norm{A}_{\mathrm{F}},
\end{equation} 
for all $A\in \h_n(\mathbb{C})$ and $p\geq 2$. The proof of Theorem 1.e  implies that 
\begin{equation*}
\norm{Z}_{\mathrm{F}}\leq \Gamma(p+1)^{1/p}\cnorm{Z}_{\X,p}\leq a_p\norm{Z}_{\mathrm{F}},
\end{equation*} 
for all $Z\in \M_n$ and $p\geq 2$.

In general, suppose that $X_1, X_2, \ldots, X_n$ are iid random variables. 
A comparison of the form \eqref{eq:Classic} is a \emph{Khintchine-type inequality}. 
Establishing a Khintchine-type inequality here is equivalent to establishing an equivalence of norms as in \eqref{eq:NormEquivalence}. This is always possible since $\h_n(\C)$ is finite dimensional. 
However, establishing Khintchine-type inequalities is, in general, a nontrivial task;
this is an active research area in probability theory \cite{ENT1, ENT2, Havrilla, LO}.

\subsection{Poisson random variables}\label{Subsection:Poisson}
Let $\X=(X_1, X_2, \ldots, X_n)$, in which the $X_i$ are independent random variables on 
$\{0,1,2,\ldots\}$ distributed according to 
\begin{equation*}
f(t)= \frac{e^{-\alpha} \alpha^t}{t!},
\quad \text{in which $\alpha >0$}.
\end{equation*}
The moment and cumulant generating functions of the $X_i$ are
\begin{equation*}
M(t)= e^{\alpha  (e^t-1)}
\quad \text{and} \quad
K(t) = \alpha  (e^t-1),
\end{equation*} 
respectively.  Therefore, $\kappa_i = \alpha$ for all $i \in \N$ and hence
\begin{equation*}
\cnorm{A}_{\X, d}^d=\sum_{\vec{\pi}\vdash d} \frac{   \alpha^{|\vec{\pi}|} p_{\vec{\pi}}}{y_{\vec{\pi}}},
\end{equation*} 
For example, if $A \in \h_n$ we have
\begin{align*}
4!\cnorm{A}_{\X,4}^4
=\alpha^4(\tr A)^4+6\alpha^3(\tr A)^2\tr A^2+4\alpha^2\tr A\tr A^3+3\alpha^2(\tr A^2)^2+\alpha \tr A^4.
\end{align*}

\subsection{Pareto random variables}\label{Subsection:Pareto}
Let $\X=(X_1, X_2, \ldots, X_n)$, in which the $X_i$ are independent random variables distributed according 
to the probability density
\begin{equation*}
f(x) = 
\begin{cases}
\dfrac{\alpha}{x^{\alpha+1}} & x\geq 1, \\[5pt]
0 & x < 1.
\end{cases}
\end{equation*}
Such random variables do not admit a moment generating function.
The moments that do exist are 
\begin{equation*}
\mu_k = \frac{\alpha}{\alpha - k} \quad \text{for $k < \alpha$}.
\end{equation*}
For even $d\geq2$ with $d < \alpha$, the multinomial theorem and independence yield
\begin{align*}
d!\cnorm{A}_{\X ,d}^d 
&= \E[ \langle \X , \vec{\lambda}\rangle^d ] 
= \E\big[(\lambda_1 X_1 + \lambda_2 X_2 +  \cdots + \lambda_n X_n)^d \big] \\
&= \E \bigg[ \sum_{ \substack{k_1+k_2+\cdots+k_n=d \\ k_1,k_2,\ldots,k_n \geq 0}} 
\binom{d}{k_1,k_2,\ldots,k_n} \prod_{i=1}^n (\lambda_i X_i)^{k_i} \bigg] \\
&=  \sum_{ \substack{k_1+k_2+\cdots+k_n=d \\ k_1,k_2,\ldots,k_n \geq 0}} 
\binom{d}{k_1,k_2,\ldots,k_n} \prod_{i=1}^n \E\big[(\lambda_i X_i)^{k_i}\big] \\
&=  \sum_{ \substack{k_1+k_2+\cdots+k_n=d \\ k_1,k_2,\ldots,k_n \geq 0}} 
\binom{d}{k_1,k_2,\ldots,k_n} \prod_{i=1}^n \lambda_i^{k_i} \E\big[X_i^{k_i}\big] \\
&=  \sum_{ \substack{k_1+k_2+\cdots+k_n=d \\ k_1,k_2,\ldots,k_n \geq 0}} 
\binom{d}{k_1,k_2,\ldots,k_n} \prod_{i=1}^n \frac{\alpha \lambda_i^{k_i}}{\alpha - k_i} .
\end{align*}
In particular,
$\lim_{\alpha \to\infty}d!\cnorm{A}_{\X_{\alpha},d}^d = (\tr A)^d$
and
\begin{align*}
\lim_{\alpha \to d^{+}}
(\alpha-d)d!\cnorm{A}_{\X_{\alpha},d}^d
&= \lim_{\alpha \to d^{+}} (\alpha-d) \sum_{ \substack{k_1+k_2+\cdots+k_n=d \\ k_1,k_2,\ldots,k_n \geq 0}} 
\binom{d}{k_1,k_2,\ldots,k_n} \prod_{i=1}^n \frac{\alpha \lambda_i^{k_i}}{\alpha - k_i}  \\
&= \lim_{\alpha \to d^{+}}(\alpha-d)
\sum_{i=1}^n \binom{d}{d}\frac{d\lambda_i^{d}}{\alpha-d} = d \sum_{i=1}^n \lambda_i^{d}
= d\norm{A}_d^d,
\end{align*}
in which $\norm{A}_d$ is the Schatten $d$-norm on $\h_n$.

\begin{example}
For $n=2$, 
\begin{align*}
\cnorm{A}_{\X ,2}^2 &=\frac{1}{2}\alpha\left(\frac{\lambda_1^2}{\alpha-2}+\frac{2\alpha\lambda_1\lambda_2}{(\alpha-1)^2}+\frac{\lambda_2^2}{\alpha-2} \right), \quad \text{and}\\
\cnorm{A}_{\X ,4}^4 &= \frac{1}{24}\alpha \left(\frac{\lambda_1^4}{\alpha-4} + \frac{4\alpha\lambda_1^3\lambda_2}{\alpha^2-4\alpha+3} + \frac{6\alpha\lambda_2^2\lambda_1^2}{(\alpha-2)^2} + \frac{4\alpha\lambda_1\lambda_2^3}{\alpha^2-4\alpha+3} +
\frac{\lambda_2^4}{\alpha-4}\right).
\end{align*}
\end{example}

\section{Proof of Theorem \ref{Theorem:Main}}\label{Section:Proof}
Let $d\geq 2$ be arbitrary and let $\X=(X_1, X_2, \ldots, X_n)$ be a random vector in $\R^n$, in which $X_1, X_2, \ldots, X_n\in L^d(\Omega, \F, \P)$ are independent and identically distributed (iid) random variables.  Independence is not needed for (a); see Remark \ref{Remark:Conjugation}.
We let $\LL=(\lambda_1, \lambda_2, \ldots, \lambda_n)$ denote the vector of eigenvalues of $A \in \h_n$.  As before, $A$ denotes a typical
Hermitian matrix and $Z \in \M_n$ an arbitrary square matrix.

The proofs of (a) - (e) of Theorem \ref{Theorem:Main} are placed in separate subsections below. Before we proceed, we require an important lemma.  

\begin{lemma}\label{Lemma:PosDef}
$\X$ is positive definite.
\end{lemma}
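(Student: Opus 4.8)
The plan is to recognize the second-moment matrix $\Sigma(\X) = [\E X_iX_j]_{i,j=1}^n$ as the Gram matrix of the vectors $X_1, X_2, \ldots, X_n$ in the Hilbert space $L^2(\Omega,\F,\P)$, and to use the iid hypothesis to determine its entries explicitly. First I would confirm that $\Sigma(\X)$ even exists: since $d \geq 2$ we have each $X_i \in L^d \subseteq L^2$, so the Cauchy--Schwarz inequality gives $\E|X_iX_j| \leq \norm{X_i}_{L^2}\norm{X_j}_{L^2} < \infty$ and every entry is finite.

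Next I would compute the entries. Because the $X_i$ are identically distributed, each diagonal entry equals $\E X_i^2 = \mu_2$. Because the $X_i$ are independent (and identically distributed), each off-diagonal entry satisfies $\E X_iX_j = (\E X_i)(\E X_j) = \mu_1^2$ for $i \neq j$. Writing $I$ for the identity and $J$ for the all-ones matrix, this identifies
\[
\Sigma(\X) = (\mu_2 - \mu_1^2)\,I + \mu_1^2\, J .
\]

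To establish positive definiteness I would test the associated quadratic form against an arbitrary nonzero $\vec{c} = (c_1, c_2, \ldots, c_n) \in \R^n$:
\[
\vec{c}^{\,\top} \Sigma(\X)\, \vec{c}
= \sum_{i,j=1}^n c_i c_j\, \E X_iX_j
= (\mu_2 - \mu_1^2)\sum_{i=1}^n c_i^2 + \mu_1^2 \Big(\sum_{i=1}^n c_i\Big)^{\!2}.
\]
Both terms are nonnegative, and the decisive coefficient $\mu_2 - \mu_1^2$ is the variance of $X_1$, which is strictly positive by nondegeneracy (Jensen's inequality, as recorded in the preliminaries). Hence the first term is strictly positive whenever $\vec{c} \neq \vec{0}$, so $\vec{c}^{\,\top}\Sigma(\X)\vec{c} > 0$ and $\Sigma(\X)$ is positive definite.

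The computation is routine; the only point requiring care is the \emph{strict} inequality, and this is where I expect the main (if modest) obstacle to lie. Positive \emph{semi}definiteness is automatic, since $\vec{c}^{\,\top}\Sigma(\X)\vec{c} = \E\big(\sum_i c_i X_i\big)^2 \geq 0$ for the Gram matrix of any $L^2$ family; the genuine content of the lemma is the upgrade to strict definiteness, which rests entirely on the positive variance furnished by nondegeneracy. As an alternative to the quadratic-form argument one may diagonalize directly: $J$ has eigenvalues $n$ and $0$, so $\Sigma(\X)$ has eigenvalues $(n-1)\mu_1^2 + \mu_2 > 0$ and $\mu_2 - \mu_1^2 > 0$ with multiplicity $n-1$, again giving positive definiteness. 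Independence is used only to evaluate the off-diagonal entries; the definiteness step itself needs only nondegeneracy.
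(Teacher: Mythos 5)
Your proof is correct and follows essentially the same route as the paper: both use independence and identical distribution to obtain the decomposition $\Sigma(\X) = (\mu_2 - \mu_1^2)I + \mu_1^2 J$, and both derive strict positivity of the variance $\mu_2 - \mu_1^2$ from Jensen's inequality plus nondegeneracy. Your explicit quadratic-form computation (and the eigenvalue alternative) merely spells out the paper's final observation that a positive definite matrix plus a positive semidefinite matrix is positive definite.
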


\begin{proof}
H\"older's inequality shows that each $X_i \in L^2(\Omega, \F, \P)$, so $\mu_1$ and $\mu_2$ are finite. 
Jensen's inequality yields $\mu_1^2\leq \mu_2$; nondegeneracy of the $X_i$ ensures the inequality is strict. 
Independence implies that $\E[X_i X_j] = \E[X_i]\E[X_j]$ for $i\neq j$, so
\begin{equation*}
\Sigma(\X) = [\E X_iX_j ]=
\begin{bmatrix} 
\mu_2 & \mu_1^2 & \cdots & \mu_1^2 \\[2pt] 
\mu_1^2 & \mu_2 & \cdots & \mu_1^2 \\
\vdots & \vdots & \ddots & \vdots \\
\mu_1^2 & \mu_1^2 & \cdots & \mu_2 \\
\end{bmatrix}
=(\mu_2-\mu_1^2)I+\mu_1^2J,
\end{equation*}
in which $\mu_2-\mu_1^2>0$ and $J$ is the all-ones matrix. Thus,
$\Sigma(\X)$ is the sum of a positive definite and a positive semidefinite matrix, so it is positive definite. 
\end{proof}

\subsection{Proof of Theorem \ref{Theorem:Main}.a}
Since $X_1, X_2, \ldots, X_n\in L^d(\Omega, \F, \P)$ for some $d\geq 2$,
H\"older's inequality implies the random variable $\Lambda=\langle \X, \LL\rangle$ satisfies 
\begin{equation}\label{Eq:PosDef}
    \langle \LL, \Sigma(\X)\LL\rangle = \E [| \Lambda |^2] \leq (\E | \Lambda |^d)^{2/d}.
\end{equation}

For $A\in \h_n$, consider the nonnegative function
\begin{equation}\label{eq:RealNorm}
\NN(A)=\bigg(\,\frac{  \E |\langle \X, \LL\rangle|^d}{\Gamma(d+1)}\, \bigg)^{1/d}.
\end{equation} 
It is clearly homogeneous: $\NN(\alpha A)=|\alpha| \NN(A)$ for all $\alpha \in \R$. 
Lemma \ref{Lemma:PosDef} ensures that $\Sigma(\X)$ is positive definite, so \eqref{Eq:PosDef} implies $\NN(A) = 0$ 
if and only if $A = 0$.

We must show
that $\NN$ satisfies the triangle inequality. 
Our approach parallels that of \cite[Thm.~1]{Aguilar}:
for $d\geq 2$ even, $\mathfrak{H}(A) = h_d(\lambda_1(A),\lambda_2(A),\ldots, \lambda_n(A))^{1/d}$ is a norm on $\h_n$.
We first show that $\NN$ satisfies the triangle inequality on $\mathrm{D}_n(\R)$, the space of real diagonal matrices. 
Then we use Lewis' framework for convex matrix analysis \cite{LewisGroup} 
to establish the triangle inequality on $\h_n$. 

Let $\V$ be a finite-dimensional real vector space with inner product $\langle \cdot, \cdot \rangle_{\V}$. 
The adjoint $\phi^*$ of a linear map $\phi: \V\to \V$ satisfies $\langle\phi^*(A), B \rangle= \langle A, \phi(B) \rangle$ for all $A,B \in \V$. We say that $\phi$ is \emph{orthogonal} if $\phi^*\circ \phi$ is the identity.
Let $\OO(\V)$ denote the set of orthogonal linear maps on $\V$.  If  $\mathcal{G} \subset \OO(\V)$ is a subgroup, then
$f: \V\to \R$ is \emph{$\mathcal{G}$-invariant} if $f( \phi(A))=f(A)$ for all $\phi\in \mathcal{G}$ and $A\in V$.

\begin{definition}[Def.~2.1 of \cite{LewisGroup}]\label{Definition:NDS}
$\delta: \V\to \V$ is a \emph{$G$-invariant normal form} if 
\begin{enumerate}
\item $\delta$ is $\mathcal{G}$-invariant,
\item For each $A\in \V$, there is an $\phi\in \OO(\V)$  such that $A=\phi( \delta(A))$, and
\item $\langle A, B\rangle_{\V} \leq \langle \delta(A), \delta(B) \rangle_{\V}$ for all $A,B\in \V$.
\end{enumerate}
\end{definition} 

Such a triple $(\V, G, \delta)$ is a \emph{normal decomposition system} (NDS).
Let  $(\V, \mathcal{G}, \delta)$ be an NDS
and $\W \subseteq \V$ a subspace.  The \emph{stabilizer} of $\W$ in $\mathcal{G}$ is 
$\mathcal{G}_{\W} = \{ \phi \in \mathcal{G} : \phi(\W)=\W\}$.  We restrict the domain of $\phi \in \mathcal{G}_{\W}$
and consider $\mathcal{G}_{\W}$ as a subset of $\OO(\W)$.  

\begin{lemma}[Thm.~4.3 of \cite{LewisGroup}]\label{Lemma:Lewis}
Let $(\V, \mathcal{G}, \delta)$ and $(\W, \mathcal{G}_{\W}, \delta|_{\W})$ be normal decomposition systems with 
$\ran \delta \subset \W$. Then a $\mathcal{G}$-invariant function $f:\V\to \R$ is convex if and only if its restriction to $\W$ is convex. 
\end{lemma}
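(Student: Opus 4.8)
The plan is to prove the two implications separately. The forward direction is immediate: if $f$ is convex on all of $\V$, then its restriction to the linear subspace $\W$ is the restriction of a convex function to an affine slice, hence convex. This uses nothing beyond $\W$ being a subspace.

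All the content is in the reverse direction, where the normal-form structure of Definition \ref{Definition:NDS} must be exploited. I would first record the reduction $f = (f|_{\W})\circ \delta$. Condition (a) together with the decomposition $A = \phi(\delta(A))$ of condition (b) places each $A \in \V$ in the $\mathcal{G}$-orbit of $\delta(A)$; since $f$ is $\mathcal{G}$-invariant and $\delta(A) \in \ran\delta \subseteq \W$, this yields $f(A) = f(\delta(A)) = g(\delta(A))$, where $g := f|_{\W}$. It therefore suffices to show that $g\circ\delta$ is convex on $\V$ whenever $g$ is convex on $\W$.

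The engine I would use is the abstract von Neumann--Ky Fan inequality attached to a normal decomposition system:
\begin{equation*}
\langle \delta(x), \delta(y)\rangle_{\V} = \max_{\phi\in\mathcal{G}} \langle \phi(x), y\rangle_{\V}
\quad\text{for all }x,y\in\V .
\end{equation*}
The bound $\geq$ follows in one line: for each $\phi\in\mathcal{G}$, condition (c) gives $\langle \phi(x), y\rangle_{\V}\leq \langle \delta(\phi(x)), \delta(y)\rangle_{\V}$, and $\delta(\phi(x)) = \delta(x)$ by condition (a). Granting the formula, each map $A\mapsto \langle \delta(v), \delta(A)\rangle_{\V}$ with $v\in\W$ is a pointwise maximum of linear functionals of $A$, hence convex. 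Assuming $g$ is proper, lower semicontinuous, and convex (constant or improper cases dispatched directly), biconjugation gives $g(w) = \sup_{v\in\W}\big[\langle v, w\rangle_{\W} - g^*(v)\big]$. Substituting $w = \delta(A)$, using that $g$ -- and hence $g^*$ -- is $\mathcal{G}_{\W}$-invariant (as $\mathcal{G}_{\W}\subseteq\mathcal{G}$ and $f$ is $\mathcal{G}$-invariant), and symmetrizing the pairing over $\mathcal{G}_{\W}$ through the max formula inside the system $(\W, \mathcal{G}_{\W}, \delta|_{\W})$ (here $\ran\delta\subseteq\W$ and the idempotence $\delta\circ\delta=\delta$ are used), I would arrive at
\begin{equation*}
f(A) = \sup_{v\in\W}\big[\langle \delta(v), \delta(A)\rangle_{\V} - g^*(v)\big],
\end{equation*}
a pointwise supremum of convex functions of $A$, so $f$ is convex. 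Equivalently, the whole step is the assertion that conjugation commutes with the normal form, $(g\circ\delta)^* = g^*\circ\delta$, from which $f^{**} = (g^*\circ\delta)^* = g^{**}\circ\delta = g\circ\delta = f$.

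The hard part will be the max formula, and specifically the attainment of the maximum: the inequality direction is the short argument above, but equality requires producing $\phi\in\mathcal{G}$ with $\langle\phi(x),y\rangle_{\V} = \langle\delta(x),\delta(y)\rangle_{\V}$, which is exactly where the geometry of condition (b) (with the orthogonal maps taken in $\mathcal{G}$) enters and is the core of Lewis's theory. Secondary bookkeeping -- verifying $\delta\circ\delta = \delta$, that $\langle\cdot,\cdot\rangle_{\W}$ is the restriction of $\langle\cdot,\cdot\rangle_{\V}$ so the two max formulae are compatible, and that the Fenchel step is legitimate -- is routine by comparison.
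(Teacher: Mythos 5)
Your proposal is sound, but note first that the paper does not prove this lemma at all: it is quoted as Theorem 4.3 of \cite{LewisGroup} and used as a black box, so there is no internal proof to compare against. What you have written is, in outline, Lewis's own argument: the reduction $f=(f|_{\W})\circ\delta$, the max formula $\langle\delta(x),\delta(y)\rangle_{\V}=\max_{\phi\in\mathcal{G}}\langle\phi(x),y\rangle_{\V}$, and the conjugacy identity $(g\circ\delta)^*=g^*\circ\delta$ with biconjugation are exactly the ingredients of \cite{LewisGroup}. Two remarks. First, the step you defer as ``the hard part,'' namely attainment in the max formula, is in fact a two-line consequence of condition (b) of Definition \ref{Definition:NDS}, provided that condition is read with $\phi\in\mathcal{G}$ rather than $\phi\in\OO(\V)$: choose $g_1,g_2\in\mathcal{G}$ with $x=g_1(\delta(x))$ and $y=g_2(\delta(y))$; then $\phi=g_2\circ g_1^{-1}\in\mathcal{G}$ satisfies
\begin{equation*}
\langle \phi(x), y\rangle_{\V} \;=\; \langle g_1^{-1}(x),\, g_2^{-1}(y)\rangle_{\V} \;=\; \langle \delta(x), \delta(y)\rangle_{\V},
\end{equation*}
using orthogonality of $g_2$ and closure of $\mathcal{G}$ under composition and inversion. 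So your outline is complete modulo this short observation together with the bookkeeping you already list (idempotence $\delta\circ\delta=\delta$, $\mathcal{G}_{\W}$-invariance of $g^*$, and the Fenchel--Moreau step, which is automatic here because $f$ is real-valued on a finite-dimensional space, so $g=f|_{\W}$ is finite, convex, hence continuous, proper and lower semicontinuous). Second, your argument exposes a transcription slip in the paper itself: Definition \ref{Definition:NDS}(b) as printed asks only for $\phi\in\OO(\V)$, whereas Lewis's Definition 2.1 requires $\phi\in\mathcal{G}$; with the printed weaker hypothesis both your reduction $f(A)=f(\delta(A))$ and the attainment step collapse, so your parenthetical ``with the orthogonal maps taken in $\mathcal{G}$'' is not a convenience but the necessary reading, and it is the one under which the lemma is actually true and used in the paper (there $\phi_U(A)=UAU^*$ with $U$ unitary, which does lie in $\mathcal{G}$ by the spectral theorem).
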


Let $\V=\h_n$ be the $\R$-vector space of complex Hermitian $(A=A^*$) matrices equipped with the Frobenius inner product $(A,B) \mapsto \tr AB$.
Let $\operatorname{U}_n$ denote the group of $n \times n$ complex unitary matrices.  
For $U \in \operatorname{U}_n$, define $\phi_U: \V\to \V$ by $\phi_U(A)=UAU^*$.
Then $\mathcal{G}=\{\phi_U : U\in \operatorname{U}_n\}$
is a group under composition.  We may regard it is a subgroup of $\OO(\V)$ 
since $\phi_U^*=\phi_{U^*}$.

Let $\W=\mathrm{D}_n(\R) \subset \V$ denote the set of real diagonal matrices. 
Then $\mathcal{G}_{\W} = \{ \phi_P : P \in \mathcal{P}_n\}$,
in which $\mathcal{P}_n$ is the group of $n \times n$ permutation matrices.
Define $\delta: \V\to \V$ by 
$\delta(A)=\diag (\lambda_1(A), \lambda_2(A), \ldots, \lambda_n(A))$,
 the $n \times n$ diagonal matrix with $\lambda_1(A), \lambda_2(A), \ldots, \lambda_n(A)$ on its diagonal.
Observe that $\ran \delta \subset \W$ since the eigenvalues of a Hermitian matrix are real. We maintain this notation below.

\begin{lemma}\label{Lemma:Normal}
$(\V, \mathcal{G},\delta)$ and $(\W, \mathcal{G}_{\W}, \delta|_{\W})$ are normal decomposition systems.
\end{lemma}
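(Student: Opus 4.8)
The plan is to verify the three axioms in Definition~\ref{Definition:NDS} for the triple $(\V,\mathcal{G},\delta)$, and then observe that the analogous verification for the ``permutation'' system $(\W,\mathcal{G}_{\W},\delta|_{\W})$ is a special case, since $\mathcal{P}_n$ is realized inside $\operatorname{U}_n$ and $\delta$ restricted to diagonal matrices is the nondecreasing (or nonincreasing) rearrangement of the diagonal entries. The first step is to check $\mathcal{G}$-invariance of $\delta$: for $U \in \operatorname{U}_n$, the matrices $A$ and $\phi_U(A) = UAU^*$ are unitarily similar, hence have the same eigenvalues in the same order, so $\delta(\phi_U(A)) = \delta(A)$. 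This is immediate from the definition of $\delta$ via the ordered eigenvalues $\lambda_1(A) \geq \cdots \geq \lambda_n(A)$.

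The second step is the existence of a diagonalizing map. Given $A \in \h_n$, the spectral theorem furnishes a unitary $U$ with $A = U \diag(\lambda_1(A),\ldots,\lambda_n(A)) U^* = \phi_U(\delta(A))$, which is exactly axiom (b). The third step, axiom (c), is the Frobenius inner product inequality $\tr(AB) \leq \langle \delta(A),\delta(B)\rangle_{\V} = \sum_{i=1}^n \lambda_i(A)\lambda_i(B)$ for all $A,B \in \h_n$. This is the one substantive ingredient: it is the classical trace inequality of von Neumann (for Hermitian matrices, sometimes attributed to Ky Fan or Richter), asserting that the trace of a product of Hermitian matrices is maximized, over all ways of aligning their eigenbases, when the eigenvalues are listed in the same (decreasing) order. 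I would cite this rather than reprove it, as it is standard; alternatively one can derive it from the fact that $\tr(AB)$ as a function of the relative unitary position of the two eigenbases is maximized by simultaneous ordering.

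The fourth step handles the restricted system $(\W,\mathcal{G}_{\W},\delta|_{\W})$. On $\W = \mathrm{D}_n(\R)$, the group $\mathcal{G}_{\W} = \{\phi_P : P \in \mathcal{P}_n\}$ acts by permuting diagonal entries, and $\delta|_{\W}$ sorts the diagonal entries into the fixed order $\lambda_1 \geq \cdots \geq \lambda_n$. Axioms (a) and (b) are transparent: sorting is permutation-invariant, and any diagonal matrix is a permutation of its sorted form. Axiom (c) reduces to the elementary rearrangement inequality $\sum_i x_i y_i \leq \sum_i \widetilde{x}_i \widetilde{y}_i$ for the sorted (matching-order) rearrangements of two real vectors, which I would invoke directly. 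One should also note $\ran \delta \subset \W$, already observed in the text, so that the two systems are compatible for the later application of Lemma~\ref{Lemma:Lewis}.

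The main obstacle, such as it is, is axiom (c) for $(\V,\mathcal{G},\delta)$: the von Neumann trace inequality is the only nontrivial fact, and the proof hinges entirely on invoking (or briefly sketching) it correctly in the Hermitian setting, taking care that the Frobenius inner product here is $\tr(AB)$ with no conjugate (legitimate because $A,B$ are Hermitian, so $\tr(AB)$ is real). Everything else is bookkeeping with the spectral theorem and the definition of $\delta$. The permutation-group case then falls out as the finite-dimensional ``diagonal'' shadow of the unitary case, with the rearrangement inequality playing the role of von Neumann's inequality.
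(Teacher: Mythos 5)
Your proposal is correct and follows essentially the same route as the paper: verify the three axioms of Definition~\ref{Definition:NDS}, using similarity invariance of eigenvalues for (a), the spectral theorem for (b), and the classical trace inequality $\tr(AB)\leq\tr\delta(A)\delta(B)$ (von Neumann/Fan, cited in the paper via Lewis) for (c), with the diagonal system handled by the permutation analogues. The only cosmetic difference is that for axiom (c) on $\W$ you invoke the rearrangement inequality directly, whereas the paper observes that the property is inherited from $\V$ because the eigenvalues of a diagonal matrix are its diagonal entries; these amount to the same fact.
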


\begin{proof}
We claim that $(\V, \mathcal{G},\delta)$ is an NDS.
(a) Eigenvalues are similarity invariant, so $\delta$ is $\mathcal{G}$-invariant.
(b) For $A \in \V$, the spectral theorem gives a $U \in \operatorname{U}_n$ such that
$A = U\delta(A)U^* = \phi_U( \delta(A) )$.
(c) For $A,B \in \V$, note that $\tr AB\leq \tr \delta(A)\delta(B)$ \cite[Thm.~2.2]{LewisHermitian};
see \cite[Remark 10]{Aguilar} for further references.

We claim that $(\W,\mathcal{G}_{\W}, \delta|_{\W})$ is an NDS.
(a) $\delta|_{\W}$ is $\mathcal{G}_{\W}$-invariant since 
$\delta(\phi_P(A))=\delta(PAP^*) = \delta(A)$ for all $A \in \W$ and $P \in \mathcal{P}_n$.
(b) If $A \in \W$, then there is a $P \in \mathcal{P}_n$ such that $A = P\delta(A)P^* = \phi_P(\delta(A))$.  
(c) The diagonal elements of a diagonal matrix are its eigenvalues.  Thus, this property is inherited from $\V$.
\end{proof}

The function $\NN: \V\to \R$ is $\mathcal{G}$-invariant
since it is a symmetric function of $\lambda_1(A), \lambda_2(A), \ldots, \lambda_n(A)$; see Remark \ref{Remark:Conjugation}.
If $A,B\in \W$, define random variables $X=\langle \X, \LL(A)\rangle$ and $Y=\langle \X, \LL(B)\rangle$. 
Since $A$ and $B$ are diagonal, $\LL(A+B)=\LL(A)+\LL(B)$ and hence
\begin{equation*}
\big(\E \big|\langle \X,\LL(A+B)\rangle\big|^d\big)^{1/d}
=\big(\E |X+Y|^d\big)^{1/d}
\leq \big(\E |X|^d\big)^{1/d}+\big(\E |Y|^d\big)^{1/d}
\end{equation*} 
by Minkowski's inequality for $L^d(\Omega, \F, \P)$. Thus,
$\NN(A+B) \leq \NN(A) + \NN(B)$
for all $A,B\in \W$, and hence $\NN$ is convex on $\W$.
Lemma \ref{Lemma:Lewis} implies that $\NN$ is convex on $\V$.  Thus,
$\tfrac{1}{2} \NN(A+B)=\NN( \tfrac{1}{2}A+\tfrac{1}{2}B)\leq \tfrac{1}{2}\NN(A) +\tfrac{1}{2}\NN(B)$
for all $A,B\in \V$, so \eqref{eq:RealNorm} defines a norm on $\V=\h_n$. \qed

\begin{remark}\label{Remark:Conjugation}
Independence is not used in the proof of (a). Our proof only requires that the function $\cnorm{A}_{\X,d}$ be invariant with respect to unitary conjugation. If the $X_i$ are assumed to be identically distributed, but not necessarily independent, then $\cnorm{A}_{\X,d}$ is a homogeneous symmetric function of the eigenvalues of $A$. Any such function is invariant with respect to unitary conjugation.
\end{remark}

\subsection{Proof of Theorem \ref{Theorem:Main}.b}
Let $d \geq 2$ be even and let $\X=(X_1, X_2, \ldots, X_n)$ be a random vector, in which $X_1, X_2, \ldots, X_n$ are 
iid random variables which admit a moment generating function $M(t)$.     Let $A \in \h_n$ have 
eigenvalues $\lambda_1 \geq \lambda_2 \geq \cdots \geq \lambda_n$.
If $\Lambda =\langle \X, \LL \rangle=\lambda_1X_1+\lambda_2X_2+\cdots +\lambda_n X_n$, then independence ensures that
$M_{\Lambda}(t) = \prod_{i=1}^n M(\lambda_i t)$. Thus, 
 $\cnorm{A}_{\X,d}^d=\E [\Lambda^d]/d! = [t^d] M_{\Lambda}(t)$. \qed

\subsection{Proof of Theorem \ref{Theorem:Main}.c}
Maintain the same notation as in the proof of (b).  However, we only assume existence of the first $d$ moments of the $X_i$. In this case,  $M_{\Lambda}(t)$ is a formal series with $\kappa_1,\kappa_2,\ldots,\kappa_d$ determined and the remaining
cumulants treated as formal variables.  Then
\begin{align*}
M_{\Lambda}(t) 
&= \prod_{i=1}^n M(\lambda_i t) 
\overset{\eqref{eq:CumulantGen}}{=} \exp\bigg( \sum_{i=1}^n K(\lambda_i t) \bigg)\\
&\overset{\eqref{eq:CumulantGen}}{=} \exp\bigg( \sum_{j=1}^{\infty} \kappa_j (\lambda_1^j +\lambda_2^j+\cdots +\lambda_n^j)\frac{t^j}{j!}\bigg) \\
&=\exp \bigg( \sum_{j=1}^{\infty} \kappa_j \tr (A^j)\frac{t^j}{j!}\bigg) \\
&\overset{\eqref{eq:ExpoBell}}{=}\sum_{\ell=0}^{\infty} B_{\ell}(\kappa_1\tr A, \kappa_2\tr A^2, \ldots, \kappa_{\ell}\tr A^{\ell})\frac{t^{\ell}}{\ell !}. 
\end{align*}
Expanding the right side of \eqref{eq:ExpoBell} yields
\begin{equation}\label{eq:Bxy}
B_{\ell}(x_1,x_2, \ldots, x_{\ell})=\ell !\sum_{\substack{j_1,j_2, \ldots, j_{\ell}\geq 0\\ j_1+2j_2+\cdots +\ell j_{\ell}=\ell  }}\prod_{r=1}^{\ell} \frac{x_r^{j_r}}{(r!)^{j_r} j_r!}=\ell !\sum_{\vec{\pi}\vdash \ell}\frac{x_{\vec{\pi}}}{y_{\vec{\pi}}},
\end{equation} 
in which $x_{\vec{\pi}}=x_{i_1}x_{i_2}\cdots x_{i_j}$ for a each partition $\vec{\pi}=(i_1, i_2, \ldots, i_j)$ of $\ell$. 
Substitute $x_i= \kappa_i \tr (A^i)$ above and obtain
\begin{equation*}
d!\cnorm{A}_{\X,d}^d=[t^d] M_{\Lambda}(t) = B_{d}(\kappa_1\tr A, \kappa_2\tr A^2, \ldots, \kappa_d\tr A^d). 
\end{equation*}
Finally, \eqref{eq:Bxy} and the above ensure that
\begin{equation*}
\cnorm{A}_{\X,d}^d = \sum_{\vec{\pi}\vdash d}\frac{\kappa_{\vec{\pi}}p_{\vec{\pi}}}{y_{\vec{\pi}}}
\quad \text{for $A \in \h_n$}.\qedhere
\end{equation*}


\subsection{Proof of Theorem \ref{Theorem:Main}.d}
Recall that a convex function $f:\R^n\to \R$ is  Schur convex if and only if it is symmetric \cite[p.~258]{Roberts}. Suppose that $\vec{x},\vec{y}\in \R^n$. Let $\X=(X_1, X_2, \ldots, X_n)$ be a random vector, in which $X_1, X_2, \ldots, X_n \in L^d(\Omega, \mathcal{F}, \P)$ are identically distributed. Define random variables $X=\langle \X, \vec{x}\rangle$ and  $Y=\langle \X, \vec{y}\rangle$. 

Define $\NN:\R^n\to \R_{\geq 0}$ by $\NN(\vec{x})=\Big(\frac{\E  |\langle \X, \vec{x}\rangle|^d}{\Gamma(d+1)}\Big)^{1/d}$. This function satisfies 
\begin{equation*}
\NN(\vec{x}+\vec{y})=\bigg(\frac{\E  |\langle \X, \vec{x}+\vec{y}\rangle|^d}{\Gamma(d+1)}\bigg)^{1/d}=\bigg(\frac{\E  |X+Y|^d}{\Gamma(d+1)}\bigg)^{1/d}\leq \NN(\vec{x}) +\NN(\vec{y})
\end{equation*} 
as seen in the proof of Theorem \ref{Theorem:Main}.a. Homogeneity implies that $\NN$ is convex on $\R^n$. Finally,  $\NN$ is symmetric since the random variables $X_1, X_2, \ldots, X_n$ are identically distributed. It follows that  $\NN$ is Schur convex. Therefore, 
$\vec{\lambda}(A)\mapsto \NN(\lambda_1, \lambda_2, \ldots, \lambda_n)=\cnorm{A}_{X,d}$
is Schur convex.

\begin{remark}\label{Remark:SchurNot}
Note that independence is not required in the previous argument.
\end{remark}

\subsection{Proof of Theorem \ref{Theorem:Main}.e}\label{Subsection:ProofE}
The initial details of the proof parallel those of \cite[Thm.~3]{Aguilar}.
Let $\V$ be a $\C$-vector space with conjugate-linear involution $*$
and suppose that the real-linear subspace $\V_\R=\{v\in \V:v=v^*\}$ of $*$-fixed points has the norm $\| \cdot\|$.
Then $e^{it}v+e^{-it}v^* \in \V_{\R}$ for each $v \in \V$ and $t \in \R$, and
the path $t\mapsto \|e^{it}v+e^{-it}v^*\|$ is continuous for each $v\in \V$.
The following is \cite[Prop.~15]{Aguilar}.

\begin{lemma}\label{Lemma:GenNorm}
For even $d \geq 2$, the following 
is a norm on $\V$ that extends $\|\cdot\|$:
\begin{equation}\label{eq:ExtendRealNorm}
\NN_d(v)= \bigg( \frac{1}{2\pi \binom{d}{d/2}}\int_0^{2\pi}\|e^{it}v+e^{-it}v^*\|^d\,dt \bigg)^{1/d}.
\end{equation}
\end{lemma}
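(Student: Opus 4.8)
The plan is to verify directly that $\NN_d$ obeys the norm axioms on $\V$ and then to check that it restricts to $\norm{\cdot}$ on $\V_{\R}$. The organizing observation is that, writing $f_v(t)=e^{it}v+e^{-it}v^*$, the map $v\mapsto f_v$ is \emph{real-linear} and takes values in $\V_{\R}$: since $*$ is conjugate-linear, $(e^{it}v+e^{-it}v^*)^*=e^{-it}v^*+e^{it}v$, so $f_v(t)\in\V_{\R}$ for every $t$. Consequently
\begin{equation*}
\NN_d(v)=\binom{d}{d/2}^{-1/d}\bigg(\frac{1}{2\pi}\int_0^{2\pi}\norm{f_v(t)}^d\,dt\bigg)^{1/d}
\end{equation*}
is a fixed positive multiple of the $L^d$-norm of the $\V_{\R}$-valued path $f_v$ against normalized measure $dt/2\pi$. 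Well-definedness is then immediate, since $t\mapsto\norm{f_v(t)}^d$ is continuous on the compact interval $[0,2\pi]$ by the stated continuity of the path, so the integral is finite.

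For homogeneity over $\C$, I would write $\lambda=|\lambda|e^{i\theta}$ and use conjugate-linearity to get $f_{\lambda v}(t)=|\lambda|\,f_v(t+\theta)$; since $t\mapsto\norm{f_v(t)}$ is $2\pi$-periodic, shifting the integration variable leaves the integral unchanged, whence $\NN_d(\lambda v)=|\lambda|\,\NN_d(v)$. For positive-definiteness, $\NN_d(v)=0$ forces the continuous nonnegative integrand to vanish identically, so $f_v(t)=0$ for all $t$; evaluating at $t=0$ and $t=\pi/2$ yields $v+v^*=0$ and $i(v-v^*)=0$, hence $v=0$.

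The triangle inequality is the crux of the argument, and I expect it to be the main step. It is dispatched by combining the pointwise triangle inequality on $\V_{\R}$ with Minkowski's inequality in $L^d$: real-linearity gives $f_{u+v}=f_u+f_v$, so $\norm{f_{u+v}(t)}\leq\norm{f_u(t)}+\norm{f_v(t)}$ pointwise, and applying the $L^d$-norm to these scalar functions (monotonicity, then subadditivity) yields $\NN_d(u+v)\leq\NN_d(u)+\NN_d(v)$ after the common constant $\binom{d}{d/2}^{-1/d}$ factors out.

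Finally, to see that $\NN_d$ extends $\norm{\cdot}$, let $v\in\V_{\R}$, so $v=v^*$ and $f_v(t)=2\cos(t)\,v$. Then $\norm{f_v(t)}^d=2^d|\cos t|^d\norm{v}^d$, and the Wallis integral $\int_0^{2\pi}|\cos t|^d\,dt=2\pi\binom{d}{d/2}/2^d$ (valid for even $d$) makes the normalization collapse exactly to $\NN_d(v)=\norm{v}$. This computation is precisely why the constant $\binom{d}{d/2}$ appears in the definition, and it confirms that $\NN_d$ agrees with $\norm{\cdot}$ on $\V_{\R}$, completing the verification that $\NN_d$ is a norm on $\V$ extending $\norm{\cdot}$.
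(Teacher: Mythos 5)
Your proof is correct, but it is worth noting that the paper itself does not prove this lemma at all: its entire ``proof'' is the citation ``The following is \cite[Prop.~15]{Aguilar}.'' Your self-contained verification is therefore a genuine addition, and it follows the natural (and, in essence, the standard) route for such complexification results. The four pillars of your argument all check out: (i) real-linearity of $v\mapsto f_v$ with $f_v(t)=e^{it}v+e^{-it}v^*$ landing in $\V_{\R}$, which reduces the triangle inequality to the pointwise triangle inequality on $\V_{\R}$ followed by Minkowski's inequality in $L^d(dt/2\pi)$; (ii) conjugate-linearity of $*$ converting a complex scalar $\lambda=|\lambda|e^{i\theta}$ into the rotation $f_{\lambda v}(t)=|\lambda|f_v(t+\theta)$, so periodicity gives absolute homogeneity; (iii) definiteness from the vanishing of the continuous integrand, evaluated at $t=0$ and $t=\pi/2$ to force $v+v^*=0$ and $v-v^*=0$; and (iv) the extension property from the identity
\begin{equation*}
\frac{1}{2\pi}\int_0^{2\pi}\cos^d t\,dt=\frac{1}{2^d}\binom{d}{d/2}
\qquad(\text{$d$ even}),
\end{equation*}
which is exactly what the normalizing constant $\binom{d}{d/2}$ is designed to cancel. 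One small observation: step (iv) is also where the hypothesis that $d$ is even enters irreplaceably (for odd $d$ the integral of $\cos^d$ vanishes and the displayed identity fails), so your parenthetical ``valid for even $d$'' is not a throwaway remark but the reason the lemma is stated only for even $d$. What your write-up buys over the paper's treatment is that the reader need not chase the reference; what the paper's approach buys is brevity and consistency, since the same source \cite{Aguilar} also supplies the companion Lemma \ref{Lemma:Expr} used immediately afterward in the proof of Theorem \ref{Theorem:Main}.e.
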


Let $\mx$ be the free monoid generated by $x$ and $x^*$. 
Let $|w|$ denote the length of a word $w\in\mx$ 
and let $|w|_x$ count the occurrences of $x$ in $w$.  
For $Z\in \M_n$, let $w(Z)\in \M_n$ be the natural evaluation of $w$ at $Z$.
For example, if $w = xx^*x^2$, then $|w| = 4$, $|w|_x = 3$, and $w(Z) = Z Z^* Z^2$.
The next lemma is \cite[Lem.~16]{Aguilar}.

\begin{lemma}\label{Lemma:Expr}
Let $d\geq 2$ be even and let $\vec{\pi}=(\pi_1,\pi_2,\ldots,\pi_r)$ be a partition of $d$. 
If $Z\in \M_n$, then
\begin{equation}\label{eq:int2poly}
\begin{split}
&\frac{1}{2\pi} \int_0^{2\pi}\tr(e^{it} Z+e^{-it} Z^*)^{\pi_1}\cdots \tr(e^{it}Z+e^{-it}Z^*)^{\pi_r}\,dt \\
&\qquad\qquad = \sum_{\substack{
		w_1,\ldots,w_r \in \mx \colon \\
		|w_j|=\pi_j\ \forall j \\
		|w_1\cdots w_r|_x = \frac{d}{2}
}} \tr w_1(Z)\cdots\tr w_r(Z).
\end{split}
\end{equation}
\end{lemma}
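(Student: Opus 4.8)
The plan is to expand the integrand into a finite $\mathbb{Z}$-linear combination of exponentials $e^{ikt}$ with matrix-trace coefficients, and then integrate term by term using the orthogonality relation $\frac{1}{2\pi}\int_0^{2\pi} e^{ikt}\,dt = \delta_{k,0}$ for $k \in \mathbb{Z}$. The whole argument is a bookkeeping exercise in tracking the scalar phases $e^{\pm it}$ attached to the letters $x$ and $x^*$.

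First I would write $W_t = e^{it}Z + e^{-it}Z^*$ and expand the power $W_t^{\pi_j}$ by distributing the noncommutative product over its $\pi_j$ factors. Each resulting term corresponds to a word $w_j \in \mx$ of length $\pi_j$, where choosing $e^{it}Z$ records the letter $x$ and choosing $e^{-it}Z^*$ records $x^*$. Since the scalar phases are central, each term equals $w_j(Z)$ multiplied by $(e^{it})^{|w_j|_x}(e^{-it})^{\pi_j - |w_j|_x} = e^{it(2|w_j|_x - \pi_j)}$. Taking traces yields
\[
\tr(W_t^{\pi_j}) = \sum_{\substack{w_j \in \mx \\ |w_j| = \pi_j}} e^{it(2|w_j|_x - \pi_j)}\, \tr w_j(Z).
\]

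Next I would multiply these $r$ expansions and collect phases: the product of the individual exponentials is a single exponential whose argument is $\sum_{j=1}^r (2|w_j|_x - \pi_j) = 2|w_1\cdots w_r|_x - d$, using $\sum_j \pi_j = d$ together with the additivity of $|\cdot|_x$ under concatenation. Hence the integrand is a finite sum over tuples $(w_1, \ldots, w_r)$ with $|w_j| = \pi_j$ of the terms $e^{it(2|w_1\cdots w_r|_x - d)}\prod_{j} \tr w_j(Z)$.

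Finally, applying $\frac{1}{2\pi}\int_0^{2\pi}(\cdot)\,dt$ annihilates every tuple whose exponent is nonzero and leaves coefficient $1$ on the rest; the survivors are exactly those with $2|w_1\cdots w_r|_x = d$, i.e. $|w_1\cdots w_r|_x = d/2$, which recovers the claimed right-hand side. The evenness of $d$ is precisely what makes the surviving index set sensible, since $d/2$ must be an integer. I do not anticipate a genuine obstacle here; the only point demanding care is verifying that the scalar phase of each word factors cleanly out of the trace of a matrix product, which holds because the scalars $e^{\pm it}$ are central and commute past every factor.
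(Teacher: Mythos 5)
Your proof is correct, and it is essentially the argument behind the result the paper invokes here: the paper does not prove this lemma itself but cites \cite[Lem.~16]{Aguilar}, whose proof is the same expansion-plus-orthogonality computation you give. Expanding each $\tr(e^{it}Z+e^{-it}Z^*)^{\pi_j}$ over words, observing that each tuple $(w_1,\ldots,w_r)$ carries the central phase $e^{it(2|w_1\cdots w_r|_x-d)}$, and applying $\tfrac{1}{2\pi}\int_0^{2\pi}e^{ikt}\,dt=\delta_{k,0}$ leaves exactly the tuples with $|w_1\cdots w_r|_x=d/2$ with coefficient $1$, which is the stated right-hand side; nothing is missing.
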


Given a partition $\vec{\pi}=(\pi_1,\pi_2,\ldots,\pi_r)$ of $d$ and $Z\in \M_n$ let 
\begin{equation}\label{eq:TDef}
\TT_{\vec{\pi}}(Z)=\frac{1}{\binom{d}{d/2}}\sum_{\substack{
	w_1,\ldots,w_r \in \mx \colon \\
	|w_j|=\pi_j\ \forall j \\
	|w_1\cdots w_r|_x = \frac{d}{2}
}} \tr w_1(Z)\cdots\tr w_r(Z),
\end{equation}
that is, $\TT_{\vec{\pi}}(Z)$ is $1/{d\choose d/2}$ times the sum over the $\binom{d}{d/2}$ 
possible locations to place $d/2$ adjoints ${}^*$ among the $d$ copies of $Z$ in
\begin{equation*}
(\tr \underbrace{ZZ\cdots Z}_{\pi_1})
(\tr \underbrace{ZZ\cdots Z}_{\pi_2})
\cdots
(\tr \underbrace{ZZ\cdots Z}_{\pi_r}).
\end{equation*}

Consider the conjugate transpose $*$ on $\V=\M_n$.
The corresponding real subspace of $*$-fixed points is $\V_{\R} = \h_n$. 
Apply Proposition \ref{Lemma:GenNorm} to the norm $\cnorm{\cdot}_d$ on $\h_n$ and 
obtain the extension $\NN_d(\cdot)$ to $\M_n$ defined by \eqref{eq:ExtendRealNorm}.

If $Z \in \M_n$ and $\NN_d(A) = \|A\|_d$ is the norm for $A \in \h_n$,
then Proposition \ref{Lemma:GenNorm} ensures that the following is a norm on $\M_n$:
\begin{align*}
\NN_d(Z)
&\overset{\eqref{eq:ExtendRealNorm}}{=} \bigg( \frac{1}{2\pi \binom{d}{d/2}} \int_0^{2 \pi} \cnorm{ e^{it} Z + e^{-it}Z}_{\vec{X},d}^d \,dt\bigg)^{1/d} \\
&\overset{\eqref{eq:RealPermForm}}{=} \bigg( \frac{1}{2\pi \binom{d}{d/2}} \int_0^{2 \pi} \sum_{\vec{\pi} \,\vdash\, d} \frac{\kappa_{\vec{\pi}}p_{\vec{\pi}}( \vec{\lambda}(e^{it} Z  + e^{-it}Z^*))}{y_{\pi}}\,dt \bigg)^{1/d} \\
&\overset{\eqref{eq:pTrace}}{=} \bigg( \frac{1}{\binom{d}{d/2}} \sum_{\vec{\pi} \,\vdash\, d} \frac{\kappa_{\vec{\pi}}}{y_{\vec{\pi}}} \cdot\frac{1}{2\pi} \int_0^{2\pi} \tr(e^{it} Z + e^{-it}Z^*)^{\pi_1} \cdots
\tr (e^{it} Z + e^{-it} Z^*)^{\pi_r} \,dt \bigg)^{1/d} \\
&\overset{\eqref{eq:int2poly}}{=} \bigg( \frac{1}{\binom{d}{d/2}} \sum_{\vec{\pi} \,\vdash\, d} \frac{ \kappa_{\vec{\pi}} }{y_{\vec{\pi}}} \sum_{\substack{
		w_1,\ldots,w_r \in \mx \colon \\
		|w_j|=\pi_j\ \forall j \\
		|w_1\cdots w_r|_x = \frac{d}{2}
}} \tr w_1(Z)\cdots\tr w_r(Z)\bigg)^{1/d} \\
&\overset{\eqref{eq:TDef}}{=} \bigg(  \sum_{\vec{\pi} \,\vdash\, d} \frac{ \kappa_{\vec{\pi}}\TT_{\vec{\pi}}(Z)  }{y_{\vec{\pi}}} \bigg)^{1/d}.
\qed
\end{align*}

\section{Open Questions}\label{Section:Questions}

If $\norm{\cdot}$ is a norm on $\M_n$, then there is a scalar multiple of it (which may depend upon $n$)
that is submultiplicative.  One wonders which of the norms $\cnorm{\cdot}_{\X,d}$ are submultiplicative, 
or perhaps are when multiplied by a constant independent of $n$.  For example, \eqref{eq:cn2}
ensures that for $d=2$, a mean-zero distribution leads to a multiple of the Frobenius norm.  If $\mu_2 = 2$,
then the norm is submultiplicative.

\begin{problem}
Characterize those $\X$ that give rise to submultiplicative norms.
\end{problem}

For the standard exponential distribution, \cite[Thm~31]{Aguilar} provides an answer to the next question.
An answer to the question in the general setting eludes us.

\begin{problem}
Characterize the norms $\cnorm{\cdot}_{\X,d}$ that arise from an inner product.
\end{problem}

Several other unsolved questions come to mind.

\begin{problem}
Identify the extreme points with respect to random vector norms.
\end{problem}

\begin{problem}
Characterize norms on $\M_n$ or $\h_n$ that arise from random vectors.
\end{problem}

\bibliography{RandomNorms}

\providecommand{\bysame}{\leavevmode\hbox to3em{\hrulefill}\thinspace}
\providecommand{\MR}{\relax\ifhmode\unskip\space\fi MR }
\providecommand{\MRhref}[2]{%
  \href{http://www.ams.org/mathscinet-getitem?mr=#1}{#2}
}
\providecommand{\href}[2]{#2}
\begin{thebibliography}{10}

\bibitem{Aguilar}
Konrad Aguilar, \'Angel Ch\'avez, Stephan~Ramon Garcia, and Jurij
  Vol\v{c}i\v{c}, \emph{Norms on complex matrices induced by complete
  homogeneous symmetric polynomials}, Bulletin of the London Mathematical
  Society, in press. \url{https://doi.org/10.1112/blms.12679}.

\bibitem{Barvinok}
A.~I. Barvinok, \emph{Low rank approximations of symmetric polynomials and
  asymptotic counting of contingency tables},
  \url{https://arxiv.org/abs/math/0503170}.

\bibitem{Baston}
V.~J. Baston, \emph{Two inequalities for the complete symmetric functions},
  Math. Proc. Cambridge Philos. Soc. \textbf{84} (1978), no.~1, 1--3.
  \MR{485422}

\bibitem{Bell}
E.~T. Bell, \emph{Exponential polynomials}, Ann. of Math. (2) \textbf{35}
  (1934), no.~2, 258--277. \MR{1503161}

\bibitem{Billingsley}
Patrick Billingsley, \emph{Probability and measure}, Wiley Series in
  Probability and Statistics, John Wiley \& Sons, Inc., Hoboken, NJ, 2012,
  Anniversary edition [of MR1324786], With a foreword by Steve Lalley and a
  brief biography of Billingsley by Steve Koppes. \MR{2893652}

\bibitem{BGON}
Albrecht B\"{o}ttcher, Stephan~Ramon Garcia, Mohamed Omar, and Christopher
  O'Neill, \emph{Weighted means of {B}-splines, positivity of divided
  differences, and complete homogeneous symmetric polynomials}, Linear Algebra
  Appl. \textbf{608} (2021), 68--83. \MR{4140644}

\bibitem{ENT1}
Alexandros Eskenazis, Piotr Nayar, and Tomasz Tkocz, \emph{Gaussian mixtures:
  entropy and geometric inequalities}, Ann. Probab. \textbf{46} (2018), no.~5,
  2908--2945. \MR{3846841}

\bibitem{ENT2}
\bysame, \emph{Sharp comparison of moments and the log-concave moment problem},
  Adv. Math. \textbf{334} (2018), 389--416. \MR{3828740}

\bibitem{GOOY}
Stephan~Ramon Garcia, Mohamed Omar, Christopher O'Neill, and Samuel Yih,
  \emph{Factorization length distribution for affine semigroups {II}:
  asymptotic behavior for numerical semigroups with arbitrarily many
  generators}, J. Combin. Theory Ser. A \textbf{178} (2021), 105358, 34.
  \MR{4175889}

\bibitem{Gould}
H.~W. Gould, \emph{Explicit formulas for {B}ernoulli numbers}, Amer. Math.
  Monthly \textbf{79} (1972), 44--51. \MR{306102}

\bibitem{Haagerup}
Uffe Haagerup, \emph{The best constants in the {K}hintchine inequality}, Studia
  Math. \textbf{70} (1981), no.~3, 231--283 (1982). \MR{654838}

\bibitem{Havrilla}
Alex Havrilla and Tomasz Tkocz, \emph{Sharp {K}hinchin-type inequalities for
  symmetric discrete uniform random variables}, Israel J. Math. \textbf{246}
  (2021), no.~1, 281--297. \MR{4358280}

\bibitem{HJ}
Roger~A. Horn and Charles~R. Johnson, \emph{Matrix analysis}, second ed.,
  Cambridge University Press, Cambridge, 2013. \MR{2978290}

\bibitem{Hunter}
D.~B. Hunter, \emph{The positive-definiteness of the complete symmetric
  functions of even order}, Math. Proc. Cambridge Philos. Soc. \textbf{82}
  (1977), no.~2, 255--258. \MR{450079}

\bibitem{LO}
Rafa\l Lata\l~a and Krzysztof Oleszkiewicz, \emph{A note on sums of independent
  uniformly distributed random variables}, Colloq. Math. \textbf{68} (1995),
  no.~2, 197--206. \MR{1321042}

\bibitem{LewisHermitian}
A.~S. Lewis, \emph{Convex analysis on the {H}ermitian matrices}, SIAM J. Optim.
  \textbf{6} (1996), no.~1, 164--177. \MR{1377729}

\bibitem{LewisGroup}
\bysame, \emph{Group invariance and convex matrix analysis}, SIAM J. Matrix
  Anal. Appl. \textbf{17} (1996), no.~4, 927--949. \MR{1410709}

\bibitem{Roberts}
A.~Wayne Roberts and Dale~E. Varberg, \emph{Convex functions}, Pure and Applied
  Mathematics, Vol. 57, Academic Press [Harcourt Brace Jovanovich, Publishers],
  New York-London, 1973. \MR{0442824}

\bibitem{Roventa}
Ionel Roven\c{t}a and Lauren\c{t}iu~Emanuel Temereanc\u{a}, \emph{A note on the
  positivity of the even degree complete homogeneous symmetric polynomials},
  Mediterr. J. Math. \textbf{16} (2019), no.~1, Paper No. 1, 16. \MR{3887204}

\bibitem{StanleyBook1}
Richard~P. Stanley, \emph{Enumerative combinatorics. {V}ol. 1}, Cambridge
  Studies in Advanced Mathematics, vol.~49, Cambridge University Press,
  Cambridge, 1997, With a foreword by Gian-Carlo Rota, Corrected reprint of the
  1986 original. \MR{1442260}

\bibitem{StanleyBook2}
\bysame, \emph{Enumerative combinatorics. {V}ol. 2}, Cambridge Studies in
  Advanced Mathematics, vol.~62, Cambridge University Press, Cambridge, 1999,
  With a foreword by Gian-Carlo Rota and appendix 1 by Sergey Fomin.
  \MR{1676282}

\bibitem{Tao}
Terence Tao, \emph{Schur convexity and positive definiteness of the even degree
  complete homogeneous symmetric polynomials},
  \url{https://terrytao.wordpress.com/2017/08/06/}.

\end{thebibliography}
\bibliographystyle{amsplain}

\end{document}